\newtheorem{thm}{Theorem}[subsection]
\newtheorem{prop}[thm]{Proposition}
\newcommand{\npmod}[1]{\!\!\pmod{#1}}
\newcommand{\nnpmod}[1]{\!\!\!\!\pmod{#1}}
\newenvironment{proof}{\par\noindent{\bf[Proof]}}%
                      {$\blacksquare$\noindent\par\vspace{0.5\baselineskip}}
                      {$\blacksquare$\par\noindent}
\font\b=cmr10 scaled \magstep4
\def\bigzerol{\smash{\hbox{\b 0}}}
\def\bigzerou{\smash{\lower0.7ex\hbox{\b 0}}}
\def\bigastl{\smash{\lower0.7ex\hbox{\b *}}}
\def\bigastu{\smash{\lower2.7ex\hbox{\b *}}}
\def\addots{\mathinner
    {\mkern1mu\raise1pt\hbox{.}\mkern2mu
    \raise4pt\hbox{.}\mkern2mu\raise7pt\vbox{\kern7pt\hbox{.}}\mkern1mu}}
\renewcommand{\subsection}{\@startsection%
  {subsection}%
  {2}%
  {0mm}%
  {\baselineskip}%
  {-0.2\parindent}%
  {\normalfont\normalsize\upshape\bfseries}}%
\def\@dotsep{1.5}
\def\@pnumwidth{1em}
\title{On generic supercuspidal representations of $Sp_{2n}$} 
\author{Koichi Takase
        \thanks{The author is partially supported by 
                    JSPS KAKENHI Grant Number JP 16K05053}}
\date{}
\begin{document}   


%
%

\maketitle

\begin{abstract}
We will construct a family of irreducible generic supercuspidal
representations of the symplectic groups over
non-archimedian local field $F$ of odd residual characteristic. The
supercuspidal representations are compactly induced from irreducible
representations of the hyperspecial compact subgroup which are
inflated from irreducible representations of finite symplectic groups 
over the finite quotient ring of the integer 
ring of $F$ modulo high powers of the prime element. 

\end{abstract}

\section{Introduction}
\label{sec:introduction}

The supercuspidal representations of a classical group $G$ (that is a
unitary, symplectic or special orthogonal group) over a non-arcimedian
local field $F$ are realized as
compactly  induced representations $\text{\rm ind}_J^{G(F)}\delta$
where $\delta$ is an irreducible finite dimensional complex
representation of an open compact subgroup $J$ of $G(F)$ (see 
\cite{Stevens08}). 

If $G$ is quasi-split and unramified over $F$, then $G$ has a smooth model
over the integer ring $O_F$ of $F$. In this case, 
\cite{DeBackerReeder2010} shows that the supercuspidal representation 
$\text{\rm ind}_J^{G(F)}\delta$ is generic if and only if $J$ is
hyperspecial. We will consider the case $J=G(O_F)$. 

The irreducible complex
representations of $G(O_F)$ come from those of the finite group 
$G(O_F/\frak{p}^r)$ ($\frak{p}$ is the maximal ideal of $O_F$) via the
canonical surjection $G(O_F)\to G(O_F/\frak{p}^r)$. If $r=1$, we have 
well-developed theories on the representation of the finite reductive
group $G(\Bbb F)$ 
($\Bbb F=O_F/\frak{p}$ is the residue class field of $F$). 
The understanding of the cases $r>1$ is less complete. 
For example the representation theory of $GL_n(O_F/\frak{p}^r)$ with
$r>1$ is studied by  \cite{Shintani1968} and then \cite{Gerardin1972} to
construct supercuspidal representations of $GL_n(F)$, however their
descriptions of representations of $GL_n(O_F/\frak{p}^r)$ are rather
complicated and not suitable for detailed treatment or for
generalization to other classical groups.

Recently \cite{Takase2017} gives a uniform and quite explicit
description of the irreducible representations of $G(O_F/\frak{p}^r)$
($r>1$) associated with the regular adjoint orbit, under the
assumption of triviality of certain Schur multiplier. 
The Schur multiplier is trivial if the
characteristic polynomial of the adjoint orbit is irreducible modulo
$\frak{p}$. So starting from this explicit description of irreducible
complex representations $\delta$ of $G(O_F)$, we may be able to give a
simple proof of the fact that the compactly induced representation 
$\text{\rm ind}_{G(O_F)}^{G(F)}\delta$ is a generic supercuspidal 
representation. It also gives a good foundation for the further
detailed studies on the generic supercuspidal representations of
classical groups.

In this paper, we will study this procedure in detail for the case of
the symplectic group. Our main results are presented as Theorem 
\ref{th:main-result}. The rest of this paper is devoted to the
proofs. 


\section{Main results}
\label{sec:main-results}
\subsection[]{}
\label{susec:definition-of-symplectic-group}
Let $F$ be a non-dyadic non-archimedian local field. The integer ring
of $F$ is denoted by $O_F$ with the maximal ideal 
$\frak{p}$ generated by $\varpi$. The residue class field is denoted
by $\Bbb F=O_F/\frak{p}$. Let $\tau$ be a continuous unitary
additive character of $F$ such that 
$$
 \{x\in F\mid \tau(xO_F)=1\}=O_F.
$$
Let $G=Sp_{2n}$ be the symplectic group 
and $\frak{g}=\frak{sp}_{2n}$ its Lie algebra 
with respect to the symplectic form
$$
 J_n=\begin{bmatrix}
      0&I_n\\
     -I_n&0
     \end{bmatrix}\;\text{\rm where}\;
 I_n=\begin{bmatrix}
       &       &1\\
       &\addots& \\
      1&       &
     \end{bmatrix},
$$
that is to say 
$G$ (resp. $\frak{g}$) is a smooth $O_F$-group scheme 
(resp. affine $O_F$-scheme) such that
$$
 G(L)=\{g\in GL_{2n}(L)\mid gJ_n\,^tg=J_n\}
$$
$$
 (\text{\rm resp. 
  $\frak{g}(L)=\{X\in\frak{gl}_{2n}(L)\mid XJ_n+J_n\,^tX=0\}$})
$$
for any commutative $O_F$-algebra $L$. Let us denote by
$$
 B:\frak{g}\times\frak{g}\to\text{\rm Spec}(O_F[t])
$$
the trace form, that is $B(X,Y)=\text{\rm tr}(XY)$ for all 
$X,Y\in\frak{g}(L)$ with any commutative $O_F$-algebra $L$.

For any $O_F$-group subscheme $H\subset G$ and 
for any integers $0<r<l$, let
us denote by $H(\frak{p}^l/\frak{p}^r)$ (resp. $H(\frak{p}^r)$) 
the kernel of the canonical
group homomorphism $H(O_F/\frak{p}^r)\to H(O_F/\frak{p}^l)$ 
(resp. $H(O_F)\to H(O_F/\frak{p}^r)$). These group homomorphisms are 
surjective if $H$ is smooth over $O_F$. 

\subsection[]{}\label{subsec:main-result}
Fix an integer $r>1$ and put $r=l+l^{\prime}$ with the smallest 
integer $l$ such that $0<l^{\prime}\leq l$, in other word
$$
 l^{\prime}=\begin{cases}
             l&:\text{\rm if $r=2l$ is even,}\\
             l-1&:\text{\rm if $r=2l-1$ is odd.}
            \end{cases}
$$
Fix a $\beta\in\frak{g}(O_F)$ such that the reduction modulo
$\frak{p}$ of the characteristic polynomial $\chi_{\beta}(t)$ of 
$\beta\in\frak{gl}_{2n}(O_F)$ is irreducible over $\Bbb F$. 
Then the $F$-subalgebra $K=F[\beta]$ of the matrix algebra 
$M_{2n}(F)$ is an unramified field extension of $F$ of degree $2n$. 
The element $\tau\in\text{\rm Gal}(K/F)$ of order $2$ of 
the cyclic extension $K/F$ is given by 
$x^{\tau}=J_n\,^txJ_n^{-1}$. In particular $\beta^{\tau}=-\beta$. The
integer ring and the residue class field of $K$ are denoted by 
$O_K$ and $\Bbb K=O_K/\varpi O_K$ 
respectively. On the other hand $\beta$ is a regular
semisimple element of $\frak{g}(F)$ and its centralizer 
$T=Z_G(\beta)$ in $G$ is a maximal torus of $G$ which splits over
$K$. More precisely $T$ is a smooth $O_F$-group subscheme of $G$ such
that 
$$
 T(L)=\left\{g\in L[\overline{\beta}]^{\times}\mid
                            gJ_n\,^tgJ_n^{-1}=1_{2n}\right\}
$$
for any commutative $O_F$-algebra where 
$\overline{\beta}\in\frak{g}(L)$ is the image of 
$\beta\in\frak{g}(O_F)$ by the canonical homomorphism 
$\frak{g}(O_F)\to\frak{g}(L)$. In particular 
\begin{equation}
 T(O_F/\frak{p}^r)
 =\left\{\overline\varepsilon
         \in\left(O_K/\varpi^rO_K\right)^{\times}\mid
   \varepsilon\in O_K^{\times}\,\text{\rm s.t.}\,
   N_{K/K_+}(\varepsilon)=1\right\}
\label{eq:description-of-t-o-mod-p-r}
\end{equation}
where $K_+\subset K$ is the fixed subfield of 
$\tau\in\text{\rm Gal}(K/F)$. 

A group isomorphism 
$\frak{g}(O_F/\frak{p}^{l^{\prime}})\,\tilde{\to}\,
 G(\frak{p}^l/\frak{p}^r)$ is given by 
$$
 X\nnpmod{\frak{p}^{l^{\prime}}}\mapsto
 1_{2n}+\varpi^lX\nnpmod{\frak{p}^r}.
$$
Let us denote by $\psi_{\beta}$ the unitary character of 
$G(\frak{p}^l/\frak{p}^r)$ defined by
$$
 \psi_{\beta}\left(1_{2n}+\varpi^lX\nnpmod{\frak{p}^r}\right)
 =\tau\left(\varpi^{-l^{\prime}}B(X,\beta)\right).
$$
On $T(O_F/\frak{p}^r)\cap G(\frak{p}^l/\frak{p}^r)$, the character
$\psi_{\beta}$ is described by
$$
 \psi_{\beta}\left(1+\varpi^lx\nnpmod{\frak{p}^r}\right)
 =\tau\left(\varpi^{-l^{\prime}}T_{K/F}(x\beta)\right)
$$
with $x\in O_K=O_F[\beta]$ such that 
$x+x^{\tau}\equiv 0\npmod{\frak{p}^{l^{\prime}}}$.

The equivalence classes of the irreducible complex representations of
the finite group $G(O_F/\frak{p}^r)$ is denoted by 
$\text{\rm Irr}(G(O_F/\frak{p}^r))$ which is identified with the
equivalence classes of the representations of the compact group
$G(O_F)$ trivial on $G(\frak{p}^r)$. 

Let us denote by $\text{\rm Irr}(G(O_F/\frak{p}^r),\psi_{\beta})$ 
the equivalence classes of the irreducible complex representations 
$\delta$ of $G(O_F/\frak{p}^r)$ such that the restriction 
$\delta|_{G(\frak{p}^l/\frak{p}^r)}$ contains the character
$\psi_{\beta}$. 

The set of the group
homomorphisms $\theta:T(O_F/\frak{p}^r)\to\Bbb C^{\times}$ such that 
$\theta=\psi_{\beta}$ on 
$T(O_F/\frak{p})\cap G(\frak{p}^l/\frak{p}^r)$ is denoted by 
$\Theta(T(O_F/\frak{p}^r),\psi_{\beta})$. 

Then a bijection
$\theta\mapsto\delta_{\beta,\theta}$ of 
$\Theta(T(O_F/\frak{p}^r),\psi_{\beta})$ 
onto $\text{\rm Irr}(G(O_F/\frak{p}^r),\psi_{\beta})$ is given as an
application of the general theory developed by 
\cite{Takase2017}. See section 
\ref{sec:representation-of-hyper-compact-subgroup} for some details. 

Now our main result is

\begin{thm}\label{th:main-result}
For any $\theta\in\Theta(T(O_F/\frak{p}^r),\psi_{\beta})$, the compactly
induced representation 
$\pi_{\beta,\theta}
 =\text{\rm ind}_{G(O_F)}^{G(F)}\delta_{\beta,\theta}$ is
\begin{enumerate}
\item irreducible generic supercuspidal representation of $G(F)$, 
\item with the formal degree 
$$
 \dim\delta_{\beta,\theta}
 =q^{n^2r}(1-q^{-n})\prod_{k=1}^{n-1}\left(1-q^{-2k}\right)
$$
      if the Haar measure of $G(F)$ is normalized so that the volume of
      $G(O_F)$ is one.
\item The irreducible representation $\delta_{\beta,\theta}$ is contained in
      $\pi_{\beta,\theta}|_{G(O_F)}$ with multiplicity one, and
\item if an irreducible complex representation $\delta$ of 
      $G(O_F/\frak{p}^r)$ is contained in
      $\pi_{\beta,\theta}|_{G(O_F)}$,  
      then $\delta=\delta_{\beta,\theta}$. 
\end{enumerate}
\end{thm}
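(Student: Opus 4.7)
The plan is to establish (1)--(4) using Mackey's intertwining criterion, Frobenius reciprocity, and a direct analysis of the minimal $K$-type data.

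Parts (1), (3), (4) rest on a single intertwining computation. By Mackey's theorem for compactly induced representations, $\pi_{\beta,\theta}$ is irreducible, and hence supercuspidal since $G(O_F)$ is compact, precisely when
$$\text{\rm Hom}_{G(O_F)\cap gG(O_F)g^{-1}}\bigl(\delta_{\beta,\theta},{}^g\delta_{\beta,\theta}\bigr)=0$$
for every $g\in G(F)\setminus G(O_F)$. I would first observe that the elliptic torus $T$, being anisotropic modulo center and split by the unramified extension $K/F$, satisfies $T(F)=T(O_F)\subset G(O_F)$, so that by the Cartan decomposition it suffices to verify the vanishing for representatives of $G(O_F)\backslash G(F)/G(O_F)$ lying in the open positive Weyl chamber of the standard diagonal split torus. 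Restricting a putative intertwiner to the normal subgroup $G(\frak{p}^l/\frak{p}^r)\cap gG(\frak{p}^l/\frak{p}^r)g^{-1}$ forces the characters $\psi_\beta$ and $\psi_\beta\circ\text{\rm Ad}(g^{-1})$ to agree there, which via the isomorphism $X\mapsto 1+\varpi^l X$ and the non-degeneracy of the trace form $B$ translates into a strong congruence on $g\beta g^{-1}-\beta$. Since $\bar\beta\in\frak{g}(\Bbb F)$ has irreducible characteristic polynomial and is therefore strongly regular (its centralizer in $\frak{g}(\Bbb F)$ being exactly $\frak{t}(\Bbb F)$), this congruence is incompatible with $g\notin G(O_F)$, yielding the required contradiction. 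Given (1), part (3) is immediate from $\text{\rm Hom}_{G(O_F)}(\delta_{\beta,\theta},\pi_{\beta,\theta})=\text{\rm End}_{G(F)}(\pi_{\beta,\theta})=\Bbb C$, and (4) follows because any other irreducible constituent $\delta$ of $\pi_{\beta,\theta}|_{G(O_F)}$ would produce, via Mackey's decomposition, a nonzero intertwiner of $\delta$ with ${}^g\delta_{\beta,\theta}$ for some $g\notin G(O_F)$, contradicting the same analysis.

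For (2), the formal degree of an irreducible supercuspidal compactly induced representation is $\dim\delta/\text{\rm vol}(G(O_F))=\dim\delta_{\beta,\theta}$ under our normalization. From the construction of $\delta_{\beta,\theta}$ in Takase 2017 as an induction from $T(O_F/\frak{p}^r)G(\frak{p}^l/\frak{p}^r)$ (with a Weil-representation factor inserted when $r$ is odd), the dimension reduces to an elementary count involving $|Sp_{2n}(\Bbb F)|=q^{n^2}\prod_{k=1}^n(q^{2k}-1)$ and the norm-one description \eqref{eq:description-of-t-o-mod-p-r} of $T(O_F/\frak{p}^r)$; the closed form emerges after routine simplification using $(q^{2n}-1)/(q^n+1)=q^n-1$.

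Genericity is the subtlest point. Adapting the strategy of DeBacker--Reeder to positive depth, I would fix an $F$-rational Borel $B=TU$ and a generic unitary character $\psi_U$ of $U(F)$, and exhibit a nonzero Whittaker functional on $\pi_{\beta,\theta}$ by locating, via Frobenius reciprocity for Whittaker models of compactly induced representations, a vector in $\delta_{\beta,\theta}$ on which a translate of $\psi_U$ acts through an appropriate unipotent subgroup of $G(O_F/\frak{p}^r)$. The depth-zero minimal $K$-type of $\delta_{\beta,\theta}$ should coincide with the Deligne--Lusztig cuspidal representation $R_T^{\bar\theta}$ attached to the elliptic pair $(T,\bar\theta)$, with $\bar\theta$ automatically regular by virtue of the irreducibility of the characteristic polynomial of $\bar\beta$. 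The main obstacle will be matching $\psi_U$ to the explicit Heisenberg-induced model of $\delta_{\beta,\theta}$ in a way uniform in the parity of $r$; once this matching is secured, the rest is a character-theoretic computation.
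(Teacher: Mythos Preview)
Your Mackey framework for (1) and (3) and the formal-degree computation (2) are fine in outline and match the paper. But there are two genuine gaps.

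First, your argument for (4) does not go through as written. For an arbitrary irreducible $\delta$ of $G(O_F/\frak{p}^r)$ you have no control over $\delta|_{G(\frak{p}^l/\frak{p}^r)}$, so you cannot ``restrict a putative intertwiner'' and compare $\psi_\beta$-characters as in your sketch for (1). (Even for (1), note that $\delta_{\beta,\theta}|_{G(\frak{p}^l/\frak{p}^r)}$ is a direct sum of $G(O_F)$-conjugates of $\psi_\beta$, not $\psi_\beta$ itself, so the restriction step needs care.) The paper handles (1), (3), (4) simultaneously through a single key lemma: for each maximal parabolic $P_i=L_iU_i$, the space of $U_i(\frak{p}^{r-1}/\frak{p}^r)$-fixed vectors in $\delta_{\beta,\theta}$ is zero, since a fixed vector would force some $G(O_F)$-conjugate of $\bar\beta$ into $\frak{p}_i(\Bbb F)$, contradicting irreducibility of $\chi_\beta\pmod{\frak p}$. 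For a Cartan representative $g\notin G(O_F)$ one then has $U_i(O_F)\subset G(O_F)\cap g^{-1}G(O_F)g$ at the index $i$ where the valuation jumps, and $gU_i(\frak{p}^{r-1})g^{-1}\subset G(\frak{p}^r)$, so $\delta^g$ is \emph{trivial} on $U_i(\frak{p}^{r-1})$ for \emph{any} $\delta$ of level $\le r$. This asymmetry is what makes (4) work, and it is missing from your sketch.

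Second, the genericity argument cannot proceed via Deligne--Lusztig theory: we have $r>1$ throughout, so $\delta_{\beta,\theta}$ has positive depth and its minimal $K$-type data is the orbit of $\beta\pmod{\frak{p}^{l'}}$, not a virtual DL character $R_T^{\bar\theta}$. The paper's route is entirely concrete. Using Iwasawa decomposition and Frobenius reciprocity, it reduces genericity to the claim that $\psi_{\text{\rm Ad}(g)\beta}$ agrees with an explicit depth-$r$ generic character $\chi_r$ on $U(\frak{p}^l/\frak{p}^r)$ for some $g\in G(O_F)$. This is then established by a canonical-form result: any $\beta$ with $\chi_\beta\pmod{\frak p}$ irreducible is $G(O_F)$-conjugate to a matrix whose subdiagonal entries are all $1$. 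The proof diagonalizes $\beta$ over $K$, writes down a companion-type $\beta'\in\frak{g}(O_F)$ with the same characteristic polynomial, uses $H^1(\text{\rm Gal}(K/F),T(K))=1$ to conjugate $\beta'$ to $\beta$ by some $g\in G(F)$, and finally invokes irreducibility of $\chi_\beta\pmod{\frak p}$ once more to force $g\in G(O_F)$ via the Iwasawa decomposition of $g$. Without a result of this type, the Whittaker matching you describe has no anchor.
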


The explicit description of $\delta_{\beta,\theta}$ will be given in
the next section. The proof of Theorem \ref{th:main-result}, except
the genericity of $\pi_{\beta,\theta}$, is given in Section 
\ref{sec:construction-of-super-cuspidal-representation}. The
genericity is proved in Section
\ref{sec:genericity-of-supercuspidal-representation}. 

\section{Representations of hyperspecial compact subgroup}
\label{sec:representation-of-hyper-compact-subgroup}
In this section, we will recall the explicit descriptions of the
bijection $\theta\mapsto\delta_{\beta,\theta}$ of 
$\Theta(T(O_F/\frak{p}^r),\psi_{\beta})$ onto 
$\text{\rm Irr}(G(O_F/\frak{p}^r),\psi_{\beta})$ given by \cite{Takase2017}. 

We will keep the notations of section \ref{sec:main-results}. 

\subsection[]{}\label{subsec:clifford-theory}
Let us denote by $H(O_F/\frak{p}^r)$ the subgroup of 
$g\in G(O_F/\frak{p}^r)$ such that
$$
 \psi_{\beta}(ghg^{-1})=\psi_{\beta}(h)\,
 \text{\rm for all}\,h\in G(\frak{p}^l/\frak{p}^r).
$$
In other word
\begin{align*}
 H(O_F/\frak{p}^r)
 &=\left\{g\nnpmod{\frak{p}^r}\in G(O_F/\frak{p}^r)\biggm|
          \text{\rm Ad}(g)\beta\equiv\beta\nnpmod{\frak{p}^{l^{\prime}}}
         \right\}\\
 &=T(O_F/\frak{p}^r)\cdot G(\frak{p}^{l^{\prime}}/\frak{p}^r)
\end{align*}
because $T$ is a smooth $O_F$-group scheme, and hence the canonical
group homomorphism $T(O_F/\frak{p}^r)\to T(O_F/\frak{p}^{l^{\prime}})$
is surjective. 

Let us denote by 
$\text{\rm Irr}(H(O_F/\frak{p}^r),\psi_{\beta})$ the set of the
equivalence classes of the irreducible complex representations
$\sigma$ of $H(O_F/\frak{p}^r)$ such that the restriction 
$\sigma|_{G(\frak{p}^l/\frak{p}^r)}$ contains the character
$\psi_{\beta}$. Then Clifford's theory says
that 
$$
 \sigma\mapsto\text{\rm Ind}_{H(O_F/\frak{p}^r)}
                            ^{G(O_F/\frak{p}^r)}\sigma
$$
gives a bijection
of $\text{\rm Irr}(H(O_F/\frak{p}),\psi_{\beta})$ onto 
$\text{\rm Irr}(G(O_F/\frak{p}^r),\psi_{\beta})$. We have a bijection 
$\theta\mapsto\sigma_{\beta,\theta}$ of
$\Theta(T(O_F/\frak{p}^r),\psi_{\beta})$ onto 
$\text{\rm Irr}(H(O_F/\frak{p}^r),\psi_{\beta})$ described as in the
following two subsections. Then 
$$
 \theta\mapsto
 \delta_{\beta,\theta}=\text{\rm Ind}_H^G\sigma_{\beta,\theta}
 \quad
(G=G(O_F/\frak{p}^r), H=H(O_F/\frak{p}^r)) 
$$
gives the required
bijection of $\Theta(T(O_F/\frak{p}^r),\psi_{\beta})$ onto 
$\text{\rm Irr}(G(O_F/\frak{p}^r),\psi_{\beta})$. 

\subsection[]{}
\label{subsec:description-of-sigma-beta-theta-for-even-r}
Suppose that $r=2l$ is even. Then
$$
 H(O_F/\frak{p}^r)
 =T(O_F/\frak{p}^r)\cdot G(\frak{p}^l/\frak{p}^r)
$$
and 
$\theta\in\Theta(T(O_F/\frak{p}^r),\psi_{\beta})$ define a character 
$\sigma_{\beta,\theta}$ of $H(O_F/\frak{p}^r)$ by
$$
 \sigma_{\beta,\theta}(gh)=\theta(g)\cdot\psi_{\beta}(h)
$$
for $g\in T(O_F/\frak{p}^r)$ and 
$h\in G(\frak{p}^l/\frak{p}^r)$. 

\subsection[]{}
\label{subsec:description-of-sigma-beta-theta-for-odd-r}
Suppose that $r=2l-1\geq 3$ is odd. In this case we have
$$
 H(O_F/\frak{p}^r)
 =T(O_F/\frak{p}^r)\cdot G(\frak{p}^{l-1}/\frak{p}^r).
$$
We will construct a representation of $G(\frak{p}^{l-1}/\frak{p}^r)$
by means of Schr\"odinger representation, and then we will extend it
to a representation of $H(O_F/\frak{p}^r)$ by means of Weil
representation over the finite field $\Bbb F$. 

Let $\frak{t}=\text{\rm Lie}(T)$ be the Lie algebra of the $O_F$-group
scheme $T=Z_G(\beta)$. Then, for any commutative $O_F$-algebra $L$
$$
 \frak{t}(L)=\{X\in\frak{g}(L)\mid [X,\overline\beta]=0\}
$$
is the centralizer of $\overline\beta\in\frak{g}(L)$ (the image of
$\beta\in\frak{g}(O_F)$ by the canonical homomorphism 
$\frak{g}(O_F)\to\frak{g}(L)$) in $\frak{g}(L)$. 

Let us denote by 
$Z(\frak{p}^{l-1}/\frak{p}^r)$ the inverse image
of $\frak{t}(\Bbb F)$ under the surjective group homomorphism 
$G(\frak{p}^{l-1}/\frak{p}^r)\to\frak{g}(\Bbb F)$ defined by
$$
 1_{2n}+\varpi^{l-1}X\nnpmod{\frak{p}^r}\mapsto
 X\nnpmod{\frak{p}}
$$
which is a normal subgroup of $G(\frak{p}^{l-1}/\frak{p}^r)$
containing $G(\frak{p}^l/\frak{p}^r)$. 

Put $\Bbb V=\frak{g}(\Bbb F)/\frak{t}(\Bbb F)$ which is a symplectic
$\Bbb F$-space with respect to the symplectic form 
$\langle\dot X,\dot Y\rangle_{\beta}=B([X,Y],\overline\beta)$ 
($X,Y\in\frak{g}(\Bbb F)$ and 
 $\overline\beta=\beta\npmod{\frak p}\in\frak{g}(\Bbb F)$). 
Sice $B|_{\frak{t}(\Bbb F)\times\frak{f}(\Bbb F)}$ is non-degenerate,
we have $\frak{g}(\Bbb F)=V\oplus\frak{t}(\Bbb F)$ with
$$
 V=\{X\in\frak{g}(\Bbb F)\mid B(X,\frak{t}(\Bbb F))=0\}.
$$
Let us denote by $v\mapsto[v]$ the inverse $\Bbb F$-linear mapping of
the canonical $\Bbb F$-linear isomorphism $V\,\tilde{\to}\,\Bbb V$.

Let $H_{\beta}$ be the Heisenberg group
associated with the symplectic space $(\Bbb V,\langle\,,\rangle_{\beta})$,
that is $H_{\beta}=\Bbb V\times\Bbb C^1$ with group operation
$$
 (u,s)\cdot(v,t)
 =(u+v,s\cdot t\cdot\widehat\tau(2^{-1}\langle u,v\rangle_{\beta}))
$$
where $\widehat\tau(x\npmod{\frak p})=\tau(\varpi^{-1}x)$ and $\Bbb
C^1$ is the multiplicative group of complex numbers of absolute value
one. Fix a polarization $\Bbb V=\Bbb W^{\prime}\oplus\Bbb W$. Then we
have an irreducible unitary representation 
$(\omega_{\beta},L^2(\Bbb W^{\prime}))$ of $H_{\beta}$ called
Schr\"odinger representation. Here $L^2(\Bbb W^{\prime})$ is the
complex vector space of the complex-valued functions on $\Bbb
W^{\prime}$, and 
$$
 \left(\omega_{\beta}(u,s)f\right)(w)
 =s\cdot\widehat\tau\left(2^{-1}\langle u_-,u_+\rangle_{\beta}
                          +\langle w,u_+\rangle_{\beta}\right)
  \cdot f(w+u_-)
$$
for $f\in L^2(\Bbb W^{\prime})$ and $(u,s)\in H_{\beta}$ with
$u=u_-+u_+$ ($u_-\in\Bbb W^{\prime}, u_+\in\Bbb W$). By a general
theory of Weil representation over finite field \cite{Gerardin1977}, 
there exists a group homomorphism 
$\Omega:Sp(\Bbb V)\to GL_{\Bbb C}(L^2(\Bbb W^{\prime}))$ such that
$$
 \omega_{\beta}(u\sigma,s)
 =\Omega(\sigma)^{-1}\circ\omega_{\beta}(u,s)\circ\Omega(\sigma)
$$
for all $\sigma\in Sp(\Bbb V)$ and $(u,s)\in H_{\beta}$. 

Fix additive character $\rho$ of $\frak{t}(\Bbb F)$. Then an
irreducible representation $\omega_{\beta,\rho}$ of 
$G(\frak{p}^{l-1}/\frak{p}^r)$ on $L^2(\Bbb W^{\prime})$ 
is defined as follows. 
Take an element 
$h=1_{2n}+\varpi^{l-1}T\npmod{\frak{p}^r}$ 
of $G(\frak{p}^{l-1}/\frak{p}^r)$ and hence 
$T\npmod{\frak p}\in\frak{g}(\Bbb F)$. Put 
$T\npmod{\frak p}=[v]+Y$ with $v\in\Bbb V$ and 
$Y\in\frak{t}(\Bbb F)$. Then 
$\omega_{\beta,\rho}(h)\in GL_{\Bbb C}(L^2(\Bbb W^{\prime}))$ is
defined by 
$$
 \omega_{\beta,\rho}(h)
 =\tau\left(\varpi^{-l}B(T,\beta)-2^{-1}B(T^2,\beta)\right)\cdot
  \rho(Y)\cdot\omega_{\beta}(v,1).
$$
If $h\in Z(\frak{p}^{l-1}/\frak{p}^r)$, then 
$\omega_{\beta,\rho}(h)$ is the homothety of
$$
 \psi_{\beta,\rho}(h)
 =\tau\left(\varpi^{-l}B(T,\beta)-2^{-1}B(T^2,\beta)\right)\cdot
  \rho(T\nnpmod{\frak p}),
$$
where $\psi_{\beta,\rho}$ is a character of 
$Z(\frak{p}^{l-1}/\frak{p}^r)$ which coincides with
$\psi_{\beta}$ on $G(\frak{p}^l/\frak{p}^r)$, and all extensions of
$\psi_{\beta}$ to $Z(\frak{p}^{l-1}/\frak{p}^r)$ are
given in this way. Furthermore we have
\begin{equation}
 \text{\rm Ind}^{G(\frak{p}^{l-1}/\frak{p}^r)}
               _{Z(\frak{p}^{l-1}/\frak{p}^r)}
               \psi_{\beta,\rho}
 =\bigoplus^{\dim\omega_{\beta,\rho}}\omega_{\beta,\rho}
\label{eq:isotypic-decomposition-of-finite-induced-rep}
\end{equation}
where
$$
 \dim\omega_{\beta,\rho}
 =q^{2^{-1}(\dim_{\Bbb F}\frak{g}(\Bbb F)-\frak{t}(\Bbb F))}
 =q^{n^2}.
$$
Take a $g\in T(O_F/\frak{p}^r)$ and put
$\overline g=g \npmod{\frak p}\in T(\Bbb F)$. Then 
$\sigma_{\overline g}\in Sp(\Bbb V)$ is defined by 
$v\sigma_{\overline g}
 =\text{\rm Ad}(\overline g)^{-1}X\npmod{\frak{t}(\Bbb F)}$ for 
$v=X\npmod{\frak{t}(\Bbb F)}\in\Bbb V$. Then an irreducible representation 
$(\sigma_{\beta,\rho},L^2(\Bbb W^{\prime}))$ of 
$H(O_F/\frak{p}^r)
 =T(O_F/\frak{p}^r)\cdot G(\frak{p}^{l-1}/\frak{p}^r)$ is defined by
$$
 \sigma_{\beta,\rho}(gh)
 =\Omega(\sigma_{\overline g})\circ\omega_{\beta,\rho}(h)
$$
for $g\in T(O_F/\frak{p}^r)$ and $h\in G(\frak{p}^{l-1}/\frak{p}^r)$. 
Note that on  
$$
 T(O_F/\frak{p}^r)\cap G(\frak{p}^{l-1}/\frak{p}^r)
 \subset Z(\frak{p}^{l-1}/\frak{p}^r),
$$
$\sigma_{\beta,\rho}$ is the homothety by $\psi_{\beta,\rho}$. 

Now take a $\theta\in\Theta(T(O_F/\frak{p}^r),\psi_{\beta})$ and restrict
$\rho$ by the condition that $\psi_{\beta,\rho}=\theta$ on 
$T(O_F/\frak{p}^r)\cap G(\frak{p}^{l-1}/\frak{p}^r)$. In other word
$\rho$ is given by
$$
 \rho(Y\nnpmod{\frak p})
 =\tau\left(-\varpi^{-l}B(Y,\beta)\right)\cdot\theta(g)
$$
for $Y\in\frak{t}(O_F)$ and 
$$
 g=1_{2n}+\varpi^{l-1}Y+2^{-1}\varpi^{2l-2}Y^2\nnpmod{\frak{p}^r}
 \in T(O_F/\frak{p}^r).
$$
Note that the canonical morphism $\frak{t}(O_F)\to\frak{t}(\Bbb F)$ is
surjective. Then finally the representation 
$\sigma_{\beta,\theta}$ of $H(O_F/\frak{p}^r)$ is defined by
$$
 \sigma_{\beta,\theta}(gh)
 =\theta(g)\cdot\sigma_{\beta,\rho}(gh)
$$
for $g\in T(O_F/\frak{p}^r)$ and $h\in G(\frak{p}^{l-1}/\frak{p}^r)$. 

\subsection[]{}
\label{subsec:dimension-of-delta-beta-theta}
The explicit construction of $\beta_{\beta,\theta}$ being given, the
dimension of it is

\begin{prop}\label{prop:dimension-of-delta-beta-theta}
For any $\theta\in\Theta(T(O_F/\frak{p}^r),\psi_{\beta})$, we have
$$
  \dim\delta_{\beta,\theta}
 =q^{n^2r}(1-q^{-n})\prod_{k=1}^{n-1}\left(1-q^{-2k}\right).
$$
\end{prop}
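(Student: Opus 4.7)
The plan is to use the identity $\delta_{\beta,\theta} = \text{\rm Ind}_H^G \sigma_{\beta,\theta}$ from Section~\ref{subsec:clifford-theory} (writing $G = G(O_F/\frak{p}^r)$ and $H = H(O_F/\frak{p}^r)$), so that $\dim \delta_{\beta,\theta} = [G : H] \cdot \dim \sigma_{\beta,\theta}$, and to compute each factor by reducing to orders of finite groups. I would first assemble the relevant counts: by smoothness of $G$ over $O_F$ and the standard formula for $|Sp_{2n}(\Bbb F)|$, one has $|G(O_F/\frak{p}^r)| = q^{(r-1)(2n^2+n)} \cdot q^{n^2}\prod_{k=1}^n (q^{2k}-1)$; from \eqref{eq:description-of-t-o-mod-p-r} and the surjectivity of the norm $N_{K/K_+}$ on residue fields, together with smoothness of $T$ (which has $\dim T = n$), $|T(O_F/\frak{p}^r)| = (q^n+1)\, q^{(r-1)n}$; and for $0 < a < r$ the usual filtration arguments give $|G(\frak{p}^a/\frak{p}^r)| = q^{(r-a)(2n^2+n)}$ and $|T(O_F/\frak{p}^r) \cap G(\frak{p}^a/\frak{p}^r)| = q^{(r-a)n}$.

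Next I would treat the two parities of $r$ separately. In the even case $r = 2l$, the subgroup $H = T(O_F/\frak{p}^r) \cdot G(\frak{p}^l/\frak{p}^r)$ has order $|T|\cdot|G(\frak{p}^l/\frak{p}^r)|/|T \cap G(\frak{p}^l/\frak{p}^r)|$, and $\sigma_{\beta,\theta}$ is a character by Section~\ref{subsec:description-of-sigma-beta-theta-for-even-r}, so $\dim\sigma_{\beta,\theta} = 1$. In the odd case $r = 2l-1$, one has $H = T(O_F/\frak{p}^r) \cdot G(\frak{p}^{l-1}/\frak{p}^r)$, and \eqref{eq:isotypic-decomposition-of-finite-induced-rep} gives $\dim\sigma_{\beta,\theta} = \dim\omega_{\beta,\rho} = q^{n^2}$, since $(\dim_{\Bbb F}\frak{g}(\Bbb F) - \dim_{\Bbb F}\frak{t}(\Bbb F))/2 = (2n^2+n-n)/2 = n^2$.

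Assembling these pieces is essentially bookkeeping: the factor $q^n+1$ in $|T|$ cancels with $q^{2n}-1 = (q^n-1)(q^n+1)$ in $|Sp_{2n}(\Bbb F)|$ to leave $(q^n-1)\prod_{k=1}^{n-1}(q^{2k}-1)$, while the remaining powers of $q$ add to $(r-1)n^2$ in the even case and, after multiplication by $\dim\sigma_{\beta,\theta} = q^{n^2}$, also to $(r-1)n^2$ in the odd case. Pulling $q^n$ and $q^{2k}$ out of each bracket finally rewrites the answer as $q^{rn^2}(1-q^{-n})\prod_{k=1}^{n-1}(1-q^{-2k})$, and in particular shows the dimension is independent of $\theta$. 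No step is individually difficult; the only mild obstacle is checking that the even and odd calculations collapse to the same closed form, the extra factor $q^{n^2}$ from $\dim\sigma_{\beta,\theta}$ in the odd case exactly compensating for the one-step-shorter filtration $\frak{p}^{l-1}$ used to construct $H$.
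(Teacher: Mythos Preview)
Your proposal is correct and follows essentially the same approach as the paper: both compute $\dim\delta_{\beta,\theta}=[G:H]\cdot\dim\sigma_{\beta,\theta}$, evaluate $|H|$ via the product formula for $H=T(O_F/\frak{p}^r)\cdot G(\frak{p}^{l'}/\frak{p}^r)$, and use the norm exact sequence from \eqref{eq:description-of-t-o-mod-p-r} to obtain $|T(O_F/\frak{p}^r)|$. The only cosmetic difference is that the paper keeps the unified parameter $l'$ throughout, whereas you treat the even and odd parities separately before observing they collapse to the same expression.
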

\begin{proof}
To begin with
$$
 \dim\delta_{\beta,\theta}
 =(G(O_F/\frak{p}^r):H(O_F/\frak{p}^r))\cdot\dim\sigma_{\beta,\theta}
$$
and
$$
 \dim\sigma_{\beta,\theta}=\begin{cases}
                            1&:\text{\rm if $r=2l$ is even,}\\
                            q^{n^2}&:\text{\rm if $r=2l-1$ is odd.}
                           \end{cases}
$$
Since 
$H(O_F/\frak{p}^r)
 =T(O_F/\frak{p}^r)G(\frak{p}^{l^{\prime}}/\frak{p}^r)$, we have
\begin{align*}
 |H(O_F/\frak{p}^r)|
 &=\frac{|T(O_F/\frak{p}^r)||G(\frak{p}^{l^{\prime}})|}
        {|T(O_F/\frak{p}^r)\cap G(\frak{p}^{l^{\prime}})|}\\
 &=|T(O_F/\frak{p}^{l^{\prime}})|\cdot
   \frac{|G(O_F/\frak{p}r)|}
        {|G(O_F/\frak{p}^{l^{\prime}})}.
\end{align*}
Now
$$
 |G(O_F/\frak{p}^r)|
 =q^{n(2n+1)r}\prod_{k=1}^n(1-q^{-2k}).
$$
On the other hand we have an exact sequence
$$
 1\to T(O_F/\frak{p}^r)\to\left(O_K/\varpi^rO_K\right)^{\times}
  \xrightarrow{N_{K/K_+}}\left(O_{K_+}/\varpi^rO_{K_+}\right)^{\times}
  \to 1
$$
because of \eqref{eq:description-of-t-o-mod-p-r}, and hence 
$|T(O_F/\frak{p}^r)|=q^{nr}(1+q^{-n})$.
\end{proof}

\section{Construction of irreducible supercuspidal representations}
\label{sec:construction-of-super-cuspidal-representation}
In this section we will prove Theorem \ref{th:main-result} except the
genericity. 

We will keep the notations of the preceding sections. Fix a 
$\theta\in\Theta(T(O_F/\frak{p}^r),\psi_{\beta})$. Let us denote by
$V_{\beta,\theta}$ the representation space
of $\delta_{\beta,\theta}$. 

\subsection[]{}
\label{subsec:particular-subgroup-of-sp(2n)}
To begin with, we will fix standard parabolic subgroups of 
$G=Sp_{2n}$. 
$$
 B=\left\{\begin{bmatrix}
           a&b\\
           0&^{\tau}a^{-1}
          \end{bmatrix}\in Sp_{2n}\right\}=L\ltimes U
$$
is a Borel subgroup where 
\begin{align*}
 L&=\left\{\begin{bmatrix}
            a&0\\
            0&^{\tau}a^{-1}
           \end{bmatrix}\biggm| a=\begin{bmatrix}
                                   a_1&      &   \\
                                      &\ddots&   \\
                                      &      &a_n
                                  \end{bmatrix}\right\},\\
 U&=\left\{\begin{bmatrix}
            a&b\\
            0&^{\tau}a^{-1}
           \end{bmatrix}\in Sp_{2n}
                        \biggm|a=\begin{bmatrix}
                                  1&      &\bigastu\\
                                   &\ddots&    \\
                                   &      &1
                                 \end{bmatrix}\right\}.
\end{align*}
We use
the notation $^{\tau}a=I_n\,^taI_n$ for a square matrix $a$ of size
$n$, that is the matrix transposed with respect to the second
diagonal. The standard maximal parabolic subgroups 
$P_i=L_i\ltimes U_i$ ($1\leq i\leq n$) are given by
\begin{align*}
 L_i&=\left\{\begin{bmatrix}
              g& & & \\
               &a&b& \\
               &c&d& \\
               & & &^{\tau}g^{-1}
             \end{bmatrix}\biggm|g\in GL_i,\;
                                 \begin{bmatrix}
                                  a&b\\
                                  c&d
                                 \end{bmatrix}\in Sp_{2(n-i)}\right\},\\
 U_i&=\left\{\begin{bmatrix}
              1_i&  \ast &  \ast &\ast\\
                 &1_{n-i}&   0   &\ast\\
                 &       &1_{n-i}&\ast\\
                 &       &       &1_i
             \end{bmatrix}\in Sp_{2n}\right\}.
\end{align*}
They are all smooth $O_F$-group subscheme of $G=Sp_{2n}$. 
The Lie algebra $\frak{u}_i$ and $\frak{l}_i$ of $U_i$ and $L_i$ are
given by 
$$
 \frak{u}_i=\left\{\begin{bmatrix}
                    0&A&B&C\\
                     &0&0&^{\tau}B\\
                     & &0&-^{\tau}A\\
                     & & &  0
                   \end{bmatrix}\biggm|A,B\in M_{i,n-i}, C\in M_i, 
                                       ^{\tau}C=C\right\}
$$
and
$$
 \frak{l}_i=\left\{\begin{bmatrix}
                    X& &         &         \\
                     &A&    B    &         \\
                     &C&-^{\tau}A&         \\
                     & &         &-^{\tau}X
                   \end{bmatrix}\biggm|X\in\frak{gl}_i, 
                   \begin{bmatrix}
                    A&B\\
                    C&-^{\tau}A
                   \end{bmatrix}\in\frak{sp}_{2(n-i)}\right\}
$$
respectively. The Lie algebra of $P_i$  is 
$\frak{p}_i=\frak{l}_i\oplus\frak{u}_i$. 

\subsection[]{}
\label{subsec:remark-on-fixed-vector-of-delta-beta-theta}
We will use the following proposition repeatedly.

\begin{prop}\label{prop:remark-on-fixed-vector-of-delta-beta-theta}
For all $1\leq i\leq n$, the space of 
$U_i(\frak{p}^{r-1}/\frak{p}^r)$-fixed vectors of $V_{\beta,\theta}$
is trivial.
\end{prop}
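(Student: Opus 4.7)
My plan is to decompose $V_{\beta,\theta}$ with respect to the abelian normal subgroup $G(\mathfrak{p}^l/\mathfrak{p}^r)$ into a direct sum of character eigenspaces, show that all characters appearing are $G(O_F/\mathfrak{p}^r)$-conjugates of $\psi_\beta$, and then argue that each such character restricts non-trivially to $U_i(\mathfrak{p}^{r-1}/\mathfrak{p}^r)$.

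First I would observe that since $l^{\prime}\geq 1$ we have $r-1\geq l$, so $U_i(\mathfrak{p}^{r-1}/\mathfrak{p}^r)\subseteq G(\mathfrak{p}^{r-1}/\mathfrak{p}^r)\subseteq G(\mathfrak{p}^l/\mathfrak{p}^r)$, and since $2l\geq r$ the latter group is abelian. Hence $\delta_{\beta,\theta}|_{G(\mathfrak{p}^l/\mathfrak{p}^r)}$ splits as a direct sum of characters, and a vector is fixed by $U_i(\mathfrak{p}^{r-1}/\mathfrak{p}^r)$ iff its components in the isotypic summands for characters non-trivial on $U_i(\mathfrak{p}^{r-1}/\mathfrak{p}^r)$ all vanish.

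Next I would identify the characters in play. Inspection of the constructions in Sections \ref{subsec:description-of-sigma-beta-theta-for-even-r} and \ref{subsec:description-of-sigma-beta-theta-for-odd-r} shows that $\sigma_{\beta,\theta}|_{G(\mathfrak{p}^l/\mathfrak{p}^r)}$ is $\psi_\beta$-isotypic; the odd case requires checking that in the formula for $\omega_{\beta,\rho}(h)$ with $h=1+\varpi^l X$, the term $\tau(-2^{-1}B(T^2,\beta))$ is trivial (because $T=\varpi X$ makes $B(T^2,\beta)\in\varpi^2 O_F$) and the Heisenberg factor $\omega_\beta(v,1)$ is the identity. Since $G(\mathfrak{p}^l/\mathfrak{p}^r)$ is normal in $G(O_F/\mathfrak{p}^r)$ and $\delta_{\beta,\theta}=\mathrm{Ind}_H^G\sigma_{\beta,\theta}$, Mackey's formula implies that the characters appearing in $\delta_{\beta,\theta}|_{G(\mathfrak{p}^l/\mathfrak{p}^r)}$ are precisely the $G(O_F/\mathfrak{p}^r)$-conjugates of $\psi_\beta$, namely $\psi_\gamma$ with $\gamma=\mathrm{Ad}(g)\beta\pmod{\mathfrak{p}^{l^{\prime}}}$ for some $g\in G(O_F/\mathfrak{p}^r)$, where $\psi_\gamma(1+\varpi^l Y)=\tau(\varpi^{-l^{\prime}}B(Y,\gamma))$.

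For $h=1+\varpi^{r-1}X\in U_i(\mathfrak{p}^{r-1}/\mathfrak{p}^r)$, writing $h=1+\varpi^l(\varpi^{r-1-l}X)$ and using $l+l^{\prime}=r$ gives $\psi_\gamma(h)=\widehat\tau(B(\overline X,\overline\gamma))$ with $\overline X\in\mathfrak{u}_i(\mathbb{F})$. Thus $\psi_\gamma|_{U_i(\mathfrak{p}^{r-1}/\mathfrak{p}^r)}$ is trivial iff $\overline\gamma$ lies in the $B$-orthogonal $\mathfrak{u}_i(\mathbb{F})^\perp$. A direct trace computation, using that $\mathfrak{u}_i$ is strictly block upper triangular with respect to the partition defining $\mathfrak{p}_i$, shows $\mathfrak{u}_i(\mathbb{F})^\perp=\mathfrak{p}_i(\mathbb{F})$.

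The final step — the core of the argument — is to show $\overline\gamma\notin\mathfrak{p}_i(\mathbb{F})$ for every $1\leq i\leq n$. Since $\overline\gamma$ is $G(\mathbb{F})$-conjugate to $\overline\beta$, it has characteristic polynomial $\chi_{\overline\beta}(t)$, which is irreducible of degree $2n$ over $\mathbb{F}$ by hypothesis. But any element of $\mathfrak{p}_i(\mathbb{F})$ is block upper triangular, so its characteristic polynomial factors as $\chi_X(t)\cdot\chi_{\mathrm{middle}}(t)\cdot\chi_{-{}^\tau X}(t)$ where $\chi_X(t)$ has degree $i\geq 1$. This non-trivial factorization contradicts irreducibility, so $\overline\gamma\notin\mathfrak{p}_i(\mathbb{F})$; hence every character $\psi_\gamma$ appearing is non-trivial on $U_i(\mathfrak{p}^{r-1}/\mathfrak{p}^r)$, and the invariants vanish. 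The main obstacle is mostly bookkeeping: verifying the $\psi_\beta$-isotypicity of $\sigma_{\beta,\theta}|_{G(\mathfrak{p}^l/\mathfrak{p}^r)}$ in the odd case, after which the proof reduces to the elementary observation that irreducibility of $\chi_{\overline\beta}$ forbids block upper triangular form.
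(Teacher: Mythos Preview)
Your argument is correct and follows essentially the same route as the paper: restrict to the abelian normal subgroup $G(\mathfrak{p}^l/\mathfrak{p}^r)$, observe that only conjugates $\psi_{\mathrm{Ad}(g)\beta}$ occur, and then show that triviality of such a character on $U_i(\mathfrak{p}^{r-1}/\mathfrak{p}^r)$ forces $\mathrm{Ad}(g)\beta\pmod{\mathfrak p}\in\mathfrak{p}_i(\Bbb F)$, contradicting irreducibility of $\chi_\beta\pmod{\mathfrak p}$. The only minor difference is that the paper invokes Clifford theory directly from the defining property $\psi_\beta\subset\delta_{\beta,\theta}|_{G(\mathfrak{p}^l/\mathfrak{p}^r)}$, so it never needs your explicit verification that $\sigma_{\beta,\theta}|_{G(\mathfrak{p}^l/\mathfrak{p}^r)}$ is $\psi_\beta$-isotypic in the odd case.
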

\begin{proof}
$G(\frak{p}^l/\frak{p}^r)$ is a normal subgroup of $G(O_F/\frak{p}^r)$
  and $\delta_{\beta,\theta}|_{G(\frak{p}^l/\frak{p}^r)}$ contains the
  character $\psi_{\beta}$ of $G(\frak{p}^l/\frak{p}^r)$. Then
$$
 \delta_{\beta,\theta}|_{G(\frak{p}^l/\frak{p}^r)}
 =\left(\bigoplus_g{\psi_{\beta}}^g\right)^e
 \qquad
 (0<e\in\Bbb Z)
$$
where $\bigoplus_g$ is the direct sum over the representatives $g$ of 
$H(O_F/\frak{p}^r)\backslash G(O_F/\frak{p}^r)$ and 
$$
 {\psi_{\beta}}^g(h)=\psi_{\beta}(ghg^{-1})
 =\psi_{\text{\rm Ad}(g^{-1})\beta}(h).
$$ 
Note that 
$U_i(\frak{p}^{r-1}/\frak{p}^r)\subset G(\frak{p}^{l-1}/\frak{p}^r)$. 
If there exists a non-trivial 
$U_i(\frak{p}^{r-1}/\frak{p}^r)$-fixed vector in $V_{\beta,\beta}$,
then there exists a $g\in G(O_F)$ such that 
$\psi_{\text{\rm Ad}(g)\beta}(h)=1$ for all 
$h\in U_i(\frak{p}^{r-1}/\frak{p}^r)$. This means that 
$$
 \tau(\varpi^{-1}B(X,\text{\rm Ad}(g)\beta))=1\;
 \text{\rm that is}\;
 B(X,\text{\rm Ad}(g)\beta)\equiv 0\nnpmod{\frak p}
$$
for all $X\in\frak{u}_i(O_F)$. This implies that 
$\text{\rm Ad}(g)\beta\npmod{\frak p}\in\frak{p}_i(\Bbb F)$, and this
means that the characteristic polynomial 
$\chi_{\beta}(t)\npmod{\frak p}$ is reducible in $\Bbb F[t]$,
contradicting the assumption on $\beta$. 
\end{proof}

\subsection[]{}
\label{subsec:admissibility-supercuspidality-multiplicity-one-of-delta}
In this subsection, we will prove the statements 1), 3) and 4) of
Theorem \ref{th:main-result} except the genericity.

To begin with

\begin{prop}\label{prop:admissibility}
$E=\text{\rm ind}_{G(O_F)}^{G(F)}\delta_{\beta,\theta}$ is admissible 
$G(F)$-module.
\end{prop}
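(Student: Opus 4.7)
The plan is to prove that for every open compact subgroup $K \subset G(F)$, $\dim E^K < \infty$. Since every such $K$ contains a principal congruence subgroup $G(\frak{p}^s)$ for $s$ sufficiently large, and $E^K \subset E^{G(\frak{p}^s)}$, it suffices to establish $\dim E^{G(\frak{p}^s)} < \infty$ for every $s \geq r$. I would proceed via the Cartan decomposition
$$
 G(F) = \bigsqcup_{\lambda}G(O_F)\varpi^\lambda G(O_F),
$$
where $\lambda = (\lambda_1,\ldots,\lambda_n)$ ranges over dominant cocharacters $\lambda_1 \geq \cdots \geq \lambda_n \geq 0$ of the standard split torus and $\varpi^\lambda = \text{\rm diag}(\varpi^{\lambda_1},\ldots,\varpi^{\lambda_n},\varpi^{-\lambda_n},\ldots,\varpi^{-\lambda_1})$.

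A function $f \in E^{G(\frak{p}^s)}$ supported in a single double coset $G(O_F)\varpi^\lambda G(O_F)$ is determined by $f(\varpi^\lambda) \in V_{\beta,\theta}$, which must be fixed by $\delta_{\beta,\theta}(K_\lambda)$ where $K_\lambda := G(O_F) \cap \varpi^\lambda G(\frak{p}^s) \varpi^{-\lambda}$. This yields
$$
 E^{G(\frak{p}^s)} \cong \bigoplus_{\lambda}V_{\beta,\theta}^{\delta_{\beta,\theta}(K_\lambda)},
$$
and since each summand has dimension at most $\dim V_{\beta,\theta}$, the task reduces to showing $V_{\beta,\theta}^{\delta_{\beta,\theta}(K_\lambda)} = 0$ for all but finitely many dominant $\lambda$.

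For the finiteness, the formula $\varpi^{-\lambda}U_\alpha(y)\varpi^\lambda = U_\alpha(\varpi^{-\langle\alpha,\lambda\rangle}y)$ gives $U_\alpha(\frak{p}^{r-1}) \subset K_\lambda$ whenever $\langle\alpha,\lambda\rangle \leq r-1-s$. Expanding any root $\alpha$ in the opposite radical $\Phi(\frak{u}_i^-)$ in simple roots, with $\lambda$ dominant, yields $\langle\alpha,\lambda\rangle \leq -(\lambda_i-\lambda_{i+1})$ (with $\lambda_{n+1}:=0$). Since $\lambda_1 = \sum_i(\lambda_i-\lambda_{i+1})$, whenever $\lambda_1 \geq n(s-r+1)$ some $i$ satisfies $\lambda_i-\lambda_{i+1} \geq s-r+1$, forcing $U_i^-(\frak{p}^{r-1}) \subset K_\lambda$. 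The argument of Proposition \ref{prop:remark-on-fixed-vector-of-delta-beta-theta} applies verbatim to the opposite unipotent---using $\frak{u}_i^{-\perp} = \frak{p}_i^-$ under the trace form---so a $U_i^-(\frak{p}^{r-1})$-fixed vector would force $\text{\rm Ad}(g)\beta\npmod{\frak p} \in \frak{p}_i^-(\Bbb F)$ for some $g \in G(O_F)$, contradicting the irreducibility of $\chi_\beta\npmod{\frak p}$. Hence $V_{\beta,\theta}^{\delta_{\beta,\theta}(K_\lambda)} = 0$ for all $\lambda$ with $\lambda_1 \geq n(s-r+1)$, leaving only finitely many contributing $\lambda$ and proving admissibility.

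The main obstacle is the root-theoretic bookkeeping of type $C_n$---deriving the uniform pairing bound $\langle\alpha,\lambda\rangle \leq -(\lambda_i-\lambda_{i+1})$ over all $\alpha \in \Phi(\frak{u}_i^-)$ (with appropriate adjustment for the long root contribution when $i = n$) and confirming the corresponding finiteness threshold---together with verifying that Proposition \ref{prop:remark-on-fixed-vector-of-delta-beta-theta} adapts cleanly to opposite unipotent radicals. Both are routine once set up carefully.
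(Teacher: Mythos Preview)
Your argument is correct and follows essentially the same route as the paper: decompose $E^{G(\frak{p}^s)}$ via Cartan representatives, identify each summand with a space of vectors fixed under $G(O_F)\cap\varpi^{\lambda}G(\frak{p}^s)\varpi^{-\lambda}$, and kill all but finitely many summands by finding a unipotent radical $U_i$ (or $U_i^-$) at level $\frak{p}^{r-1}$ inside that subgroup, then invoking Proposition~\ref{prop:remark-on-fixed-vector-of-delta-beta-theta}. The only cosmetic difference is that the paper writes the representative as $k\cdot t$ with $k\in G(O_F)$ on the left and works with the \emph{positive} unipotent $U_i$, whereas you take $t=\varpi^{\lambda}$ alone and work with the opposite $U_i^-$; since $G(\frak{p}^s)$ is normal in $G(O_F)$ the $k$ is irrelevant, and the passage from $U_i$ to $U_i^-$ is exactly the symmetry you note. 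Your explicit treatment of the case $i=n$ (via $\lambda_{n+1}:=0$) is in fact slightly more careful than the paper's write-up, which literally only bounds $e_k-e_{k+1}$ for $1\le k<n$.
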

\begin{proof}
For any $0<m\in\Bbb Z$, the space of the $G(\frak{p}^m)$-fixed vectors
is
$$
 E^{G(\frak{p}^m)}
 =\bigoplus_g{V_{\beta,\theta}}^{G(O_F)\cap g^{-1}G(\frak{p}^m)g}
$$
where $\bigoplus_g$ is the direct sum over the representatives $g$ of 
the double cosets $G(\frak{p}^m)\backslash G(F)/G(\frak{p}^m)$. Pick
up one such representative $g$ which should be of the form 
$g=k\begin{bmatrix}
     a&0\\
     0&^{\tau}a^{-1}
    \end{bmatrix}$ with $k\in G(O_F)$ and
$$
 a=\begin{bmatrix}
    \varpi^{e_1}&      &            \\
                &\ddots&            \\
                &      &\varpi^{e_n}
   \end{bmatrix},
 \quad
 e_1\geq\cdots\geq e_n\geq 0.
$$
Assume that 
$$
 \text{\rm Max}\{e_k-e_{k+1}\mid 1\leq k<n\}=e_i-e_{i+1}\geq m.
$$
Then we have
$$
 \begin{bmatrix}
  a&0\\
  0&^{\tau}a^{-1}
 \end{bmatrix}U_i(O_F)\begin{bmatrix}
                       a&0\\
                       0&^{\tau}a^{-1}
                      \end{bmatrix}^{-1}\subset G(\frak{p}^m)
$$
and hence $U_i(O_F)\subset G(O_F)\cap g^{-1}G(\frak{p}^m)g$. Then
$$
 {V_{\beta,\theta}}^{G(O_F)\cap g^{-1}G(\frak{p}^m)g}
 \subset {V_{\beta,\theta}}^{U_i(O_F)}=\{0\}
$$
by Proposition
\ref{prop:remark-on-fixed-vector-of-delta-beta-theta}. Now the number
of the elements $k\begin{bmatrix}
                   a&0\\
                   0&^{\tau}a^{-1}
                  \end{bmatrix}$ where $k$ is the representative of 
$G(\frak{p}^m)\backslash G(O_F)$ and 
$$
 a=\begin{bmatrix}
    \varpi^{e_1}&      &            \\
                &\ddots&            \\
                &      &\varpi^{e_n}
   \end{bmatrix}
 \;\text{\rm with}\;
 \begin{cases}
  e_1\geq\cdots\geq e_n\geq 0,\\
  \text{\rm Max}\{e_k-e_{k+1}\mid 1\leq k<n\}\leq m
 \end{cases}
$$
is finite. So 
$E^{G(\frak{p}^m)}$ is finite dimensional.
\end{proof}

Next we will prove

\begin{prop}\label{prop:delta-is-mutiplicity-one}
As a $G(O_F)$-module 
\begin{enumerate}
\item $\text{\rm ind}_{G(O_F)}^{G(F)}\delta_{\beta,\theta}$
      contains $\delta_{\beta,\theta}$ with multiplicity one, 
\item if an irreducible representation $\delta$ of $G(O_F/\frak{p}^r)$
      is contained in 
      $\text{\rm ind}_{G(O_F)}^{G(F)}\delta_{\beta,\theta}$,
      then $\delta=\delta_{\beta,\theta}$.
\end{enumerate}
\end{prop}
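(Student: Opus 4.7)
The plan is to restrict $E:=\text{\rm ind}_{G(O_F)}^{G(F)}\delta_{\beta,\theta}$ to $G(O_F)$, apply the Mackey decomposition, and show that no double coset other than the identity contributes any representation of $G(O_F)$ factoring through $G(O_F/\frak{p}^r)$. By Mackey,
$$
 E|_{G(O_F)}=\bigoplus_{g}\text{\rm Ind}_{H_g}^{G(O_F)}
 \bigl({}^g\delta_{\beta,\theta}\bigr),
 \qquad H_g:=G(O_F)\cap g^{-1}G(O_F)g,
$$
where $g$ ranges over representatives of $G(O_F)\backslash G(F)/G(O_F)$. By Cartan decomposition these can be taken of the form $g=\text{\rm diag}(\varpi^{e_1},\ldots,\varpi^{e_n},\varpi^{-e_n},\ldots,\varpi^{-e_1})$ with $e_1\geq\cdots\geq e_n\geq 0$, and the piece $g=e$ contributes $\delta_{\beta,\theta}$ itself exactly once. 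Since any irreducible $G(O_F)$-subrepresentation of $E$ factoring through $G(O_F/\frak{p}^r)$ must live inside the $G(\frak{p}^r)$-invariants, both 1) and 2) reduce to showing that $(\text{\rm Ind}_{H_g}^{G(O_F)}({}^g\delta_{\beta,\theta}))^{G(\frak{p}^r)}=0$ for every $g\neq e$.

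Using Frobenius reciprocity and the normality of $G(\frak{p}^r)$ in $G(O_F)$, a $G(\frak{p}^r)$-invariant function $f$ in the induced space satisfies, at every representative $x$ of $H_g\backslash G(O_F)/G(\frak{p}^r)$, that $f(x)$ is fixed by $\delta_{\beta,\theta}(G(O_F)\cap gG(\frak{p}^r)g^{-1})$. Thus it suffices to exhibit, for each $g\neq e$, a subgroup of $G(O_F)\cap gG(\frak{p}^r)g^{-1}$ whose $\delta_{\beta,\theta}$-action has no non-zero invariants.

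Set $v_l$ equal to the $\varpi$-valuation of the $l$-th diagonal entry of $g$. An element $I+Y$ belongs to $G(O_F)\cap gG(\frak{p}^r)g^{-1}$ precisely when $\text{\rm val}(Y_{jk})\geq\max\{0,\,r+v_j-v_k\}$ for every $(j,k)$. For $g\neq e$ there exists some $1\leq i\leq n$ with a strict drop $v_i>v_{i+1}$ (the case $i=n$ corresponding to $e_n>0$, via the convention $v_{n+1}=-v_n$). A routine block-by-block check across the five blocks spanning the opposite unipotent Lie algebra $\frak{u}_i^-$ shows $v_k-v_j\geq 1$ at every non-trivial entry position, so the opposite-parabolic congruence subgroup $U_i^-(\frak{p}^{r-1})$ is contained in $G(O_F)\cap gG(\frak{p}^r)g^{-1}$.

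Finally I would invoke the exact opposite-parabolic analogue of Proposition \ref{prop:remark-on-fixed-vector-of-delta-beta-theta}, namely that $V_{\beta,\theta}$ has no non-zero $U_i^-(\frak{p}^{r-1}/\frak{p}^r)$-invariant. Its proof is verbatim the proof already given, because the trace-form annihilator of $\frak{u}_i^-$ is still a parabolic Lie subalgebra $\frak{p}_i^-=\frak{l}_i\oplus\frak{u}_i^-$; existence of a non-zero fixed vector would force $\text{\rm Ad}(g)\beta\npmod{\frak{p}}\in\frak{p}_i^-(\Bbb F)$ for some $g\in G(O_F)$, contradicting the irreducibility of $\chi_{\beta}\npmod{\frak{p}}$. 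The main obstacle is the entry-by-entry valuation bookkeeping of the third paragraph, identifying which opposite-parabolic unipotent sits inside the twisted congruence subgroup uniformly in $g$; once the correct index $i$ is pinpointed, the Mackey/Frobenius wrap-up is purely formal.
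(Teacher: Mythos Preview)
Your argument is correct and follows the same Mackey/Cartan strategy as the paper. The one point of divergence is where you place the level-$r$ congruence condition. The paper passes through Frobenius reciprocity to
$\bigoplus_g\text{Hom}_{g^{-1}G(O_F)g\cap G(O_F)}(\delta^g,\delta_{\beta,\theta})$,
notes that the \emph{upper} unipotent $U_i(\frak{p}^{r-1})$ lies in this intersection, and uses $gU_i(\frak{p}^{r-1})g^{-1}\subset U_i(\frak{p}^r)$ to make $\delta^g$ trivial there; Proposition~\ref{prop:remark-on-fixed-vector-of-delta-beta-theta} then applies verbatim. You instead compute $G(\frak{p}^r)$-invariants of each Mackey summand, which forces you to locate a subgroup inside $G(O_F)\cap gG(\frak{p}^r)g^{-1}$; with the same Cartan representative this is the \emph{opposite} unipotent $U_i^-(\frak{p}^{r-1})$, and you must then invoke the $U_i^-$-analogue of Proposition~\ref{prop:remark-on-fixed-vector-of-delta-beta-theta}. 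The two routes are exact duals of one another (swap $g\leftrightarrow g^{-1}$, upper $\leftrightarrow$ lower), but the paper's is marginally tidier since it recycles the proposition already on record rather than restating it for the opposite parabolic.
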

\begin{proof}
Take an irreducible representation $\delta$ of 
$G(O_F/\frak{p}^r)$ which is identified with a representation of
$G(O_F)$ via the canonical surjection $G(O_F)\to G(O_F/\frak{p}^r)$. 
We have
$$
 \left.
 \left(\text{\rm ind}_{G(O_F)}^{G(F)}\delta_{\beta,\theta}\right)
 \right|_{G(O_F)}
 =\bigoplus_g\text{\rm ind}_{G(O_F)\cap g^{-1}G(O_F)g}^{G(O_F)}
   {\delta_{\beta,\theta}}^g,
$$
where $\bigoplus_g$ is the direct sum over the representatives $g$ of
the double cosets $G(O_F)\backslash G(F)/G(O_F)$ and
${\delta_{\beta,\theta}}^g(h)=\delta_{\beta,\theta}(ghg^{-1})$. Then 
we have
\begin{align*}
 \text{\rm Hom}_{G(O_F)}\left(\delta,
        \text{\rm ind}_{G(O_F)}^{G(F)}\delta_{\beta,\theta}\right)
 &=\bigoplus_g\text{\rm Hom}_{G(O_F)}\left(\delta,
     \text{\rm ind}_{G(O_F)\cap g^{-1}G(O_F)g}^{G(O_F)}
       {\delta_{\beta,\theta}}^g\right)\\
 &=\bigoplus_g\text{\rm Hom}_{G(O_F)\cap g^{-1}G(O_F)g}\left(\delta,
       {\delta_{\beta,\theta}}^g\right)\\
 &=\bigoplus_g\text{\rm Hom}_{g^{-1}G(O_F)g\cap G(O_F)}
    \left(\delta^g,\delta_{\beta,\theta}\right)
\end{align*}
by Frobenius reciprocity. Take a representative $g$ of 
$G(O_F)\backslash G(F)/G(O_F)$ such that $g\not\in G(O_F)$. Then
we can assume that 
$$
 g=\begin{bmatrix}
    a&0\\
    0&^{\tau}a^{-1}
   \end{bmatrix}\;\text{\rm with}\;
 a=\begin{bmatrix}
    \varpi^{e_1}&      &            & &      & \\
                &\ddots&            & &      & \\
                &      &\varpi^{e_i}& &      & \\
                &      &            &1&      & \\
                &      &            & &\ddots& \\
                &      &            & &      &1
   \end{bmatrix}
$$
and $e_1\geq\cdots\geq e_i>0$ ($1\leq i\leq n$). In this case
$$
 g^{-1}G(O_F)g\cap G(O_F)\supset
 g^{-1}U_i(O_F)g\cap U_i(O_F)=U_i(O_F)\supset U_i(\frak{p}^{r-1})
$$
so that
$$
 \text{\rm Hom}_{g^{-1}G(O_F)g\cap G(O_F)}(\delta^g,
   \delta_{\beta,\theta})
 \subset\text{\rm Hom}_{U_i(\frak{p}^{r-1})}(\delta^g,
   \delta_{\beta,\theta}).
$$
Now we have $gU_i(\frak{p}^{r-1})g^{-1}\subset U_i(\frak{p}^r)$ and
$\delta$ factors through $G(O_F)\to G(O_F/\frak{p}^r)$, hence
$$
 \text{\rm Hom}_{g^{-1}G(O_F)g\cap G(O_F)}(\delta^g,
   \delta_{\beta,\theta})
 =\{0\}
$$
because the space of $U_i(\frak{p}^{r-1})$-invariant vectors in
$V_{\beta,\theta}$ is trivial. Then we have
$$
 \text{\rm Hom}_{G(O_F)}(\delta,
   \text{\rm ind}_{G(O_F)}^{G(F)}\delta_{\beta,\theta})
 =\text{\rm Hom}_{G(O_F)}(\delta,\delta_{\beta,\theta})
$$
and this complete the proof.
\end{proof}

Then we have

\begin{prop}\label{prop:irreducibility-of-ind-delta}
$\text{\rm ind}_{G(O_F)}^{G(F)}\delta_{\beta,\theta}$ is an irreducible
  representation of $G(F)$.
\end{prop}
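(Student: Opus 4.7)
The strategy is to verify Mackey's irreducibility criterion for the compactly induced representation. Frobenius reciprocity combined with the Mackey decomposition
$$
 \text{\rm ind}_{G(O_F)}^{G(F)}\delta_{\beta,\theta}\big|_{G(O_F)}
 =\bigoplus_{g}\text{\rm ind}_{G(O_F)\cap g^{-1}G(O_F)g}^{G(O_F)}\delta_{\beta,\theta}^{g},
$$
already exploited in the proof of Proposition \ref{prop:delta-is-mutiplicity-one}, yields the decomposition
$$
 \text{\rm End}_{G(F)}\!\left(\text{\rm ind}_{G(O_F)}^{G(F)}\delta_{\beta,\theta}\right)
 \cong\bigoplus_{g}\text{\rm Hom}_{g^{-1}G(O_F)g\cap G(O_F)}\!\left(\delta_{\beta,\theta}^{g},\delta_{\beta,\theta}\right),
$$
where $g$ ranges over representatives of $G(O_F)\backslash G(F)/G(O_F)$. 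The contribution of the trivial double coset is $\text{\rm End}_{G(O_F)}\delta_{\beta,\theta}=\Bbb C$ by Schur. Once every other term is shown to vanish, the endomorphism algebra is $\Bbb C$; combined with the admissibility established in Proposition \ref{prop:admissibility} and the fact that $\text{\rm ind}_{G(O_F)}^{G(F)}\delta_{\beta,\theta}$ is generated as a $G(F)$-module by its canonical copy of $\delta_{\beta,\theta}$, this is enough to conclude irreducibility.

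The vanishing for $g\notin G(O_F)$ is precisely the computation carried out at the end of the proof of Proposition \ref{prop:delta-is-mutiplicity-one}, specialized to $\delta=\delta_{\beta,\theta}$. By the Cartan decomposition of $Sp_{2n}(F)$ one may take a representative of the form
$$
 g=\begin{bmatrix}a & 0\\ 0 & {}^{\tau}a^{-1}\end{bmatrix},\qquad
 a=\text{\rm diag}(\varpi^{e_1},\ldots,\varpi^{e_i},1,\ldots,1),\qquad
 e_1\geq\cdots\geq e_i>0,
$$
for some $1\leq i\leq n$. For such $g$ one has $U_i(\frak{p}^{r-1})\subset G(O_F)\cap g^{-1}G(O_F)g$ and, by the same matrix computation as in Proposition \ref{prop:delta-is-mutiplicity-one}, $gU_i(\frak{p}^{r-1})g^{-1}\subset U_i(\frak{p}^r)\subset G(\frak{p}^r)$. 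Since $\delta_{\beta,\theta}$ is trivial on $G(\frak{p}^r)$, the representation $\delta_{\beta,\theta}^{g}$ is trivial on $U_i(\frak{p}^{r-1})$. Hence any intertwiner $\delta_{\beta,\theta}^{g}\to\delta_{\beta,\theta}$ on the intersection takes values in $V_{\beta,\theta}^{U_i(\frak{p}^{r-1})}$, which is zero by Proposition \ref{prop:remark-on-fixed-vector-of-delta-beta-theta}.

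In summary, essentially no new calculation is needed: the proposition follows by reading the second half of the proof of Proposition \ref{prop:delta-is-mutiplicity-one} through the Mackey lens, taking $\delta=\delta_{\beta,\theta}$. The only conceivable hurdle is the passage from $\text{\rm End}_{G(F)}(\text{\rm ind}\delta_{\beta,\theta})=\Bbb C$ to actual irreducibility, but admissibility (Proposition \ref{prop:admissibility}) together with the cyclicity of the induced module over its $\delta_{\beta,\theta}$-isotypic component in $G(O_F)$ makes this step routine, since any proper subrepresentation would either contain or fail to contain the multiplicity-one copy of $\delta_{\beta,\theta}$ in the restriction to $G(O_F)$ and would in either case contradict one of the two propositions cited above.
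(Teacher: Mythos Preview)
Your proposal is correct and rests on exactly the same computation as the paper: the Mackey/Frobenius analysis already done in Proposition \ref{prop:delta-is-mutiplicity-one}, specialized to $\delta=\delta_{\beta,\theta}$, which gives $\text{\rm End}_{G(F)}(\pi_{\beta,\theta})=\Bbb C$.

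The only divergence is in the final passage to irreducibility. The paper does not argue via $\text{\rm End}=\Bbb C$ and cyclicity; instead it takes a putative proper nonzero submodule $V\subset E$, shows via Frobenius reciprocity for $\text{\rm Ind}$ that $V$ contains $\delta_{\beta,\theta}$, and then shows by a contragredient/duality argument that the admissible quotient $E/V$ \emph{also} contains $\delta_{\beta,\theta}$, so that $E$ contains it at least twice, contradicting Proposition \ref{prop:delta-is-mutiplicity-one}. Your route via cyclicity is in fact slightly more economical once one knows $\delta_{\beta,\theta}\subset V$, but your closing dichotomy (``either contain or fail to contain \ldots\ and in either case contradict one of the two propositions'') is imprecise: the second branch is actually empty, since every nonzero $G(F)$-submodule of $\text{\rm ind}_{G(O_F)}^{G(F)}\delta_{\beta,\theta}$ automatically contains $\delta_{\beta,\theta}$ (by the evaluation-at-$1$ map, or equivalently by the same Frobenius step the paper uses). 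Stating that explicitly would make your argument complete and arguably cleaner than the paper's.
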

\begin{proof}
Put $\delta=\delta_{\beta,\theta}$ for simplicity. 
Suppose that $E=\text{\rm ind}_{G(O_F)}^{G(F)}\delta$
has a non-trivial $G(F)$-submodule $V$. Then we have 
$$
 \text{\rm Hom}_{G(F)}
 \left(V,\text{\rm ind}_{G(O_F)}^{G(F)}\delta\right)
 \subset
 \text{\rm Hom}_{G(F)}
 \left(V,\text{\rm Ind}_{G(O_F)}^{G(F)}\delta\right)
 =\text{\rm Hom}_{G(O_F)}(V,\delta)
$$
by Frobenius reciprocity 
so that $V$ contains $\delta$ as a $G(O_F)$-module. On
the other hand $M=E/V$ is an admissible $G(F)$-module and we have
\begin{align*}
 \text{\rm Hom}_{G(F)}\left(\text{\rm ind}_{G(O_F)}^{G(F)}\delta,M\right)
 &=\text{\rm Hom}_{G(F)}\left(M^{\vee},
    \left(\text{\rm ind}_{G(O_F)}^{G(F)}\delta\right)^{\vee}\right)\\
 &=\text{\rm Hom}_{G(F)}\left(M^{\vee},
    \text{\rm Ind}_{G(O_F)}^{G(F)}\delta^{\vee}\right)\\
 &=\text{\rm Hom}_{G(O_F)}\left(M^{\vee},\delta^{\vee}\right)\\
 &=\text{\rm Hom}_{G(O_F)}\left(\delta^{\vee\vee},
    \left(M^{\vee}|_{G(O_F)}\right)^{\vee}\right)\\
 &=\text{\rm Hom}_{G(O_F)}(\delta,M)
\end{align*}
(here $\vee$ denote the contragredient representation) 
so that $M$ contains $\delta$ as a $G(O_F)$-module. Since $E$ is
semisimple $G(O_F)$-module, this means that $E$ contains $\delta$ at
least twice, contradicting to Proposition 
\ref{prop:delta-is-mutiplicity-one}.
\end{proof}

Now the compactly induced representation 
$\pi_{\beta,\theta}
 =\text{\rm ind}_{G(O_F)}^{G(F)}\delta_{\beta,\theta}$ is an
 irreducible admissible representation of $G(F)$, it is well known
 that $\pi_{\beta,\theta}$ is supercuspidal. 

\subsection[]{}\label{subsec:formal-degree}
In this subsection, we will prove the statement 2) of Theorem
\ref{th:main-result}. 
Put $(\delta_{\beta,\theta},V_{\beta,\theta})=(\delta,V)$ for
simplicity. 

Let $d_{G(O_F)}(k)$ be the Haar measure on $G(O_F)$ such that 
the volume of $G(O_F)$ is one, and $d_{G(F)}(x)$ the Haar measure on
$G(F)$ such that 
$$
 \int_{G(F)}\varphi(x)d_{G(F)}(x)
 =\sum_{\dot x\in G(F)/G(O_F)}\int_{G(O_F)}\varphi(xk)d_{G(O_F)}(k)
$$
for all compactly supported complex valued continuous functions
$\varphi$. Then the volume of $G(O_F)$ with respect to $d_{G(F)}(x)$ is
one. 

Now $\text{\rm ind}_{G(O_F)}^{G(F)}\delta$ consists of
the smooth function $\varphi:G(F)\to V$ such that
\begin{enumerate}
\item $\varphi(xk)=\delta(k)^{-1}\varphi(x)$ for all $k\in G(O_F)$ and
\item $\text{\rm supp}(\varphi)\pmod{G(O_F)}$ is compact in 
      $G(F)/G(O_F)$.
\end{enumerate}
On the other hand $\text{\rm Ind}_{G(O_F)}^{G(F)}\delta^{\vee}$
consists of the smooth functions
$$
 \psi:G(F)\to V^{\ast}=\text{\rm Hom}_{\Bbb C}(V,\Bbb C)
$$
such that $\psi(xk)=\psi(x)\circ\delta(k)$ for all $k\in G(O_F)$. A
non-degenerate pairing
$$
 \langle\,,\rangle:
 \text{\rm ind}_{G(O_F)}^{G(F)}\delta\times
 \text{\rm Ind}_{G(O_F)}^{G(F)}\delta^{\vee}\to\Bbb C
$$
is defined by
$$
 \langle\varphi,\psi\rangle
 =\sum_{\dot x\in G(F)/G(O_F)}\langle\varphi(x),\psi(x)\rangle
$$
where $\langle\,,\rangle:V\times V^{\ast}\to\Bbb C$ is the canonical
pairing. 

Take any $v\in V$ and $\alpha\in V^{\ast}$ and define 
$\varphi_v\in\text{\rm ind}_{G(O_F)}^{G(F)}\delta$ and 
$\psi_{\alpha}\in\text{\rm Ind}_{G(O_F)}^{G(F)}\delta^{\vee}$
by
$$
 \varphi_v(x)=\begin{cases}
               \delta(x)^{-1}v&:\text{\rm if $x\in G(O_F)$,}\\
               0              &:\text{\rm otherwise},
              \end{cases}
$$
and by
$$
 \psi_{\alpha}(x)
  =\begin{cases}
    \alpha\circ\delta(x)&:\text{\rm if $x\in G(O_F)$,}\\
    0                   &:\text{\rm otherwise}
   \end{cases}
$$
respectively. Then we have
$$
 \langle g\cdot\varphi_v,\psi_{\alpha}\rangle
 =\begin{cases}
   \langle\delta(g)v,\alpha\rangle&:\text{\rm if $g\in G(O_F)$,}\\
   0                              &:\text{\rm otherwise}
  \end{cases}
$$
for any $g\in G(F)$. Now
\begin{align*}
 &\int_{G(F)}\langle g\cdot\varphi_v,\psi_{\alpha}\rangle
             \langle g^{-1}\cdot\varphi_v,\psi_{\alpha}\rangle
               d_{G(F)}(g)\\
 =&\sum_{\dot g\in G(F)/G(O_F)}\int_{G(O_F)}
    \langle gk\cdot\varphi_v,\psi_{\alpha}\rangle
    \langle k^{-1}g^{-1}\cdot\varphi_v,\psi_{\alpha}\rangle
               d_{G(O_F)}(k)\\
 =&\sum_{\dot g\in G(F)/G(O_F)}\int_{G(O_F)}
    \langle g\cdot\varphi_{\delta(k)v},\psi_{\alpha}\rangle
    \langle g^{-1}\cdot\varphi_v,\psi_{\delta^{\vee}(k)\alpha}\rangle
               d_{G(O_F)}(k)\\
 =&\int_{G(O_F)}\langle\delta(k)v,\alpha\rangle
                \langle v,\delta^{\vee}(k)\alpha\rangle
                    d_{G(O_F)}(k)\\
 =&(\dim\delta)^{-1}\langle v,\alpha\rangle^2
 =(\dim\delta)^{-1}\langle \varphi_v,\psi_{\alpha}\rangle^2
\end{align*}
so that the formal degree of 
$\text{\rm ind}_{G(O_F)}^{G(F)}\delta$ is 
$$
 \dim\delta
 =q^{n^2r}(1-q^{-n})\prod_{k=1}^{n-1}(1-q^{-2k})
$$
by Proposition \ref{prop:dimension-of-delta-beta-theta}.

\section{Genericity of supercuspidal representations}
\label{sec:genericity-of-supercuspidal-representation}
In this section we will show that the irreducible supercuspidal
representation 
$\pi_{\beta,\theta}
 =\text{\rm ind}_{G(O_F)}^{G(F)}\delta_{\beta,\theta}$ constructed in
 the preceding section is generic, that is 
$$
 \text{\rm Hom}_{G(F)}\left(\pi_{\beta,\theta},
   \text{\rm Ind}_{U(F)}^{G(F)}\chi\right)\neq 0
$$
for some generic character $\chi$ of $U(F)$.

\subsection[]{}\label{subsec:generic-character-of-u(f)}
The characters (that is the continuous group homomorphism to 
$\Bbb C^1$) of $U(F)$ are given by
$$
 \chi_u(h)
 =\tau\left(\sum_{k=1}^{n-1}u_ka_{k,k+1}+u_nb_{n,1}\right)
 \;\text{\rm for}\;
 h=\begin{bmatrix}
    a&b\\
    0&^{\tau}a^{-1}
   \end{bmatrix}\in U(F)
$$
with $u=(u_1,\cdots,u_{n-1},u_n)\in F^n$ where 
$a_{ij}$ and $b_{ij}$ denote the $(i,j)$-entry of the square matrices
$a$ and $b$ respectively. Put $\chi_u^g(h)=\chi_u(g^{-1}hg)$ 
for any $g\in G(F)$. If $g=\begin{bmatrix}
                            t&0\\
                            0&^{\tau}t^{-1}
                           \end{bmatrix}\in T(F)$, then 
$\chi_u^g=\chi_{u^{\prime}}$ with
$$
 u^{\prime}=(t_1^{-1}t_2u_1,\cdots,t_{n-1}^{-1}t_nu_{n-1},t_n^{-2}u_n).
$$
So $\chi_u$ is generic, that is
$$
 \{g\in T(F)\mid\chi_u^g=\chi_u\}=Z(G(F))=\{\pm 1_{2n}\}
$$
if and only if $u_i\neq 0$ ($1\leq i\leq n$). In this case
$$
 \chi_u^g=\chi_{(2,2,\cdots,2,\lambda)}
 \quad
 (\lambda\in F^{\times})
$$
with suitable $g\in T(F)$.

\subsection[]{}\label{subsec:first-step-for-genericity}
Put $u=(2,2,\cdots,2,\varpi^e)$. Because of Iwasawa decomposition 
$G(F)=G(O_F)T(F)U(F)$, we have
\begin{align*}
 \text{\rm Hom}_{G(F)}&\left(
  \text{\rm ind}_{G(O_F)}^{G(F)}\delta_{\beta,\theta},
  \text{\rm Ind}_{U(F)}^{G(F)}\chi_u\right)\\
 &=\prod_g\text{\rm Hom}_{G(O_F)\cap gU(F)g^{-1}}
                            (\delta_{\beta,\theta},\chi_u^g)\\
 &=\prod_g\text{\rm Hom}_{U(O_F)}(\delta_{\beta,\theta},\chi_u^g)
\end{align*}
where the last $\prod_g$ is the direct product over the
representatives $g\in T(F)$ of $G(O_F)\backslash G(F)/U(F)$. We can
put $g=\begin{bmatrix}
        \varpi^m&0\\
        0&^{\tau}\varpi^{-m}
       \end{bmatrix}$ with 
$$
 \varpi^m=\begin{bmatrix}
           \varpi^{m_1}&      &            \\
                       &\ddots&            \\
                       &      &\varpi^{m_n}
          \end{bmatrix},
 \qquad
 m=(m_1,\cdots,m_n)\in\Bbb Z^n.
$$
Then we have 

\begin{prop}\label{prop:only-if-chi-u-g-is-chi-f}
$\text{\rm Hom}_{U(O_F)}(\delta_{\beta,\theta},\chi_u^g)\neq 0$ for
  some $g$ only if $e\equiv r\npmod{2}$. In this case 
$$
 \chi_u^g
 =\chi_{(2\varpi^{-r},2\varpi^{-r},\cdots,2\varpi^{-r},\varpi^{-r})}.
$$
\end{prop}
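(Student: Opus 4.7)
The strategy is to bound the valuations of the components of $u'$, where $\chi_u^g=\chi_{u'}$, from both above and below, and to see that these bounds pin the tuple $u'$ down to $(2\varpi^{-r},\ldots,2\varpi^{-r},\varpi^{-r})$. Recall from subsection \ref{subsec:generic-character-of-u(f)} that with $g=\text{\rm diag}(\varpi^{m_1},\ldots,\varpi^{m_n},\varpi^{-m_n},\ldots,\varpi^{-m_1})$ we have $u'_k=2\varpi^{m_{k+1}-m_k}$ for $k<n$ and $u'_n=\varpi^{e-2m_n}$, so $v(u'_k)=m_{k+1}-m_k$ for $k<n$ and $v(u'_n)=e-2m_n$.

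\emph{Upper bound on depth.} Since $\delta_{\beta,\theta}$ is inflated from $G(O_F/\frak{p}^r)$, it is trivial on $U(\frak{p}^r)$, so any non-zero $\phi\in\text{\rm Hom}_{U(O_F)}(\delta_{\beta,\theta},\chi_u^g)$ forces $\chi_u^g|_{U(\frak{p}^r)}=1$. As the entries $a_{k,k+1}$ (for $k<n$) and $b_{n,1}$ run independently over $\frak{p}^r$ as $h$ ranges over $U(\frak{p}^r)$, the normalization $\{x\in F\mid\tau(xO_F)=1\}=O_F$ yields $v(u'_k)\geq -r$ for every $k$.

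\emph{Lower bound on depth.} For each $1\leq i\leq n$ the subquotient $U_i(\frak{p}^{r-1})/U_i(\frak{p}^r)$ embeds into the abelian group $G(\frak{p}^{r-1}/\frak{p}^r)\cong\frak{g}(\Bbb F)$ (via $X\mapsto 1_{2n}+\varpi^{r-1}X$), hence is abelian. Since $\delta_{\beta,\theta}$ is trivial on $G(\frak{p}^r)$, the action of $U_i(\frak{p}^{r-1})$ on $V_{\beta,\theta}$ factors through this abelian quotient, and $V_{\beta,\theta}$ decomposes into $U_i(\frak{p}^{r-1})$-character isotypic pieces. A non-zero $\phi$ must be supported on the $\chi_u^g|_{U_i(\frak{p}^{r-1})}$-isotypic piece; if this character were trivial the piece would equal $V_{\beta,\theta}^{U_i(\frak{p}^{r-1})}$, which vanishes by Proposition \ref{prop:remark-on-fixed-vector-of-delta-beta-theta}. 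Inspecting the block shape of $U_i$ one checks that on $U_i$-elements only $a_{i,i+1}$ (if $i<n$) or $b_{n,1}$ (if $i=n$) among the entries feeding into $\chi_{u'}$ is nonzero, and it sweeps out all of $\frak{p}^{r-1}$ as $h$ varies in $U_i(\frak{p}^{r-1})$. Non-triviality of $\chi_u^g|_{U_i(\frak{p}^{r-1})}$ thus forces $v(u'_i)\leq -r$.

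Combining the two bounds gives $v(u'_k)=-r$ for every $k$. In particular $e-2m_n=-r$, so $e\equiv r\pmod 2$; substituting $m_{k+1}-m_k=-r$ and $e-2m_n=-r$ into the formula for $u'$ then gives $\chi_u^g=\chi_{(2\varpi^{-r},\ldots,2\varpi^{-r},\varpi^{-r})}$, as claimed. The main technical step is the lower-bound argument: one must verify abelianness of the subquotient $U_i(\frak{p}^{r-1})/U_i(\frak{p}^r)$ so that Proposition \ref{prop:remark-on-fixed-vector-of-delta-beta-theta} converts into a non-triviality constraint on $\chi_u^g|_{U_i(\frak{p}^{r-1})}$, and then pick out precisely which entries of $U_i$-elements contribute to $\chi_{u'}$.
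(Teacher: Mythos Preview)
Your proof is correct and follows essentially the same approach as the paper: both obtain the upper bound $v(u'_k)\geq -r$ from triviality of $\delta_{\beta,\theta}$ on $G(\frak{p}^r)$, and the lower bound $v(u'_k)\leq -r$ by observing that only the $i$-th coordinate of $u'$ contributes to $\chi_{u'}|_{U_i}$ and then invoking Proposition \ref{prop:remark-on-fixed-vector-of-delta-beta-theta}. Your isotypic-decomposition argument merely makes explicit the passage from ``$\chi_u^g$ trivial on $U_i(\frak{p}^{r-1})$'' to ``$V_{\beta,\theta}^{U_i(\frak{p}^{r-1})}\neq 0$'' that the paper asserts without comment.
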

\begin{proof}
Put  where 
Then $\chi_u^g=\chi_{u^{\prime}}$ with
$$
 u^{\prime}
 =(2\varpi^{m_2-m_1},\cdots,2\varpi^{m_n-m_{n-1}},\varpi^{e-2m_n}).
$$
If $m_{i+1}-m_i\geq -(r-1)$ for some $1\leq i<n$ or 
$e-2m_n\geq -(r-1)$, then $\chi_u^g(h)=1$ for all 
$h\in U_i(\frak{p}^{r-1})$ for some $1\leq i\leq n$. In other word 
$\delta_{\beta,\theta}$ contains non-trivial
$U_i(\frak{p}^{r-1})$-invariant vectors which contradicts to 
Proposition \ref{prop:remark-on-fixed-vector-of-delta-beta-theta}. So
we have
$$
 m_{i+1}-m_i\leq -r
 \quad
 (1\leq i<n)
 \;\text{\rm and}\;
 e-2m_n\leq -r.
$$
On the other hand $\delta_{\beta,\theta}$ is trivial on
$G(\frak{p}^r)$, and hence $\chi_u^g(h)=1$ for all 
$h\in U(O_F)\cap G(\frak{p}^r)$. This means that
$$
 m_{i+1}-m_i+r\geq 0
 \quad
 (1\leq i<n)
 \;\text{\rm and}\;
 e-2m_n+r\geq 0.
$$
Hence $e+r=2m_n$ and $m_i-m_{i+1}=r$ ($1\leq i<n$). 
\end{proof}

Put
$\chi_r=\chi_{(2\varpi^{-r},\cdots,2\varpi^{-r},\varpi^{-r})}$ which
is regarded as a character of $U(O_F/\frak{p}^r)$. On the other hand
we have
$$
 \delta_{\beta,\theta}
 =\text{\rm Ind}_{H(O_F/\frak{p}^r)}^{G(O_F/\frak{p}^r)}
   \sigma_{\beta,\theta}
$$
which is considered as a representation of $G(O_F)$ via the canonical
surjection $G(O_F)\to G(O_F/\frak{p}^r)$. Then, putting 
$U=U(O_F/\frak{p}^r)$ and $H=H(O_F/\frak{p}^r)$ for the sake of
simplicity, we have
\begin{align*}
 \text{\rm Hom}_{U(O_F)}\left(\delta_{\beta,\theta},\chi_r\right)
 &=\text{\rm Hom}_{U}\left(
    \text{\rm Ind}_{H}^{G(O_F/\frak{p}^r)}
      \sigma_{\beta,\theta},\chi_r\right)\\
 &=\bigoplus_g
    \text{\rm Hom}_{U}\left(
     \text{\rm Ind}_{U\cap gHg^{-1}}
                   ^{U}
      \sigma_{\text{\rm Ad}(g)\beta,\theta^g},\chi_r\right)\\
 &=\bigoplus_g
    \text{\rm Hom}_{U\cap gHg^{-1}}
     \left(\sigma_{\text{\rm Ad}(g)\beta,\theta^g},\chi_r\right)
\end{align*}
where $\bigoplus_g$ is the direct sum over the representatives of 
$U\backslash G(O_F/\frak{p}^r)/H$ and 
$\theta^g(h)=\theta(g^{-1}hg)$ ($h\in T(O_F/\frak{p}^r)$). Note that
$$
 U(O_F/\frak{p}^r)\cap gH(O_F/\frak{p}^r)g^{-1}
 =U(\frak{p}^{l^{\prime}}/\frak{p}^r)
 \;\text{\rm for all $g\in G(O_F/\frak{p}^r)$}.
$$
To show this fact, 
replacing $\text{\rm Ad}(g)\beta$ with $\beta$, it is sufficient to
consider the case $g=1_{2n}$. In this case 
$U(O_F/\frak{p}^r)\cap H(O_F/\frak{p}^r)$ is the inverse image 
of $U(O_F/\frak{p}^{l^{\prime}})\cap T(O_F/\frak{p}^{l^{\prime}})$
under the canonical surjection 
$U(O_F/\frak{p}^r)\to U(O_F/\frak{p}^{l^{\prime}})$. Take a
$$
 h=1_{2n}+X\nnpmod{\frak{p}^{l^{\prime}}}
 \in U(O_F/\frak{p}^{l^{\prime}})\cap T(O_F/\frak{p}^{l^{\prime}})
 =U(O_F/\frak{p}^{l^{\prime}})\cap
   \left(O_K/\varpi^{l^{\prime}}O_K\right)^{\times}.
$$
Then 
$X\npmod{\frak{p}^{l^{\prime}}}\in O_K/\varpi^{l^{\prime}}O_K$ is
nilpotent, and hence $X\equiv 0\npmod{\frak{p}^{l^{\prime}}}$. So 
$U(O_F/\frak{p}^{l^{\prime}})\cap T(O_F/\frak{p}^{l^{\prime}})
 =\{1\}$ and we have
$$
 U(O_F/\frak{p}^r)\cap H(O_F/\frak{p}^r)
 =U(\frak{p}^{l^{\prime}}/\frak{p}^r).
$$

\subsection[]{}
\label{subsec:condition-psi-beta-equal-to-chi-f-for-even-r}
Suppose that $r=2l$ is even, and hence $l^{\prime}=l$. 
In this case the character $\sigma_{\beta,\theta}$ of 
$H(O_F/\frak{p}^r)=T(O_F/\frak{p}^r)G(\frak{p}^l/\frak{p}^r)$ is
defined by 
$$
 \sigma_{\beta,\theta}(gh)=\theta(g)\psi_{\beta}(h)
 \;\text{\rm for}\;
 g\in T(O_F/\frak{p}^r),\;\;h\in G(\frak{p}^l/\frak{p}^r).
$$
Then we have
$$
 \text{\rm Hom}_{U(\frak{p}^l/\frak{p}^r)}
  (\sigma_{\beta,\theta},\chi_r)
 =\text{\rm Hom}_{U(\frak{p}^l/\frak{p}^r)}(\psi_{\beta},\chi_r).
$$
Hence 
$\text{\rm Hom}_{U(\frak{p}^l/\frak{p}^r)}
 (\sigma_{\beta,\theta},\chi_r)\neq 0$ if and only if 
$\psi_{\beta}=\chi_r$ on $U(\frak{p}^l/\frak{p}^r)$. 

\subsection[]{}
\label{subsec:condition-psi-beta-equal-to-chi-f-for-odd-r}
Suppose that $r=2l-1$ is odd, and hence $l^{\prime}=l-1$. In this case
the irreducible representation $\sigma_{\beta,\theta}$ of 
$H(O_F/\frak{p}^r)=T(O_F/\frak{p}^r)G(\frak{p}^{l-1}/\frak{p}^r)$ is
defined by
$$
 \sigma_{\beta,\theta}(gh)
 =\theta(g)\cdot
   \Omega(\sigma_{\overline g})\circ\omega_{\beta,\rho}(h)
$$
for $g\in T(O_F/\frak{p}^r)$ and $h\in G(\frak{p}^{l-1}/\frak{p}^r)$
with the notations of subsection 
\ref{subsec:description-of-sigma-beta-theta-for-odd-r}. Hence 
$\sigma_{\beta,\theta}=\omega_{\beta,\rho}$ on 
$U(\frak{p}^{l-1}/\frak{p}^r)$. On the other hand we have
\eqref{eq:isotypic-decomposition-of-finite-induced-rep}, hence 
$$
 \text{\rm Hom}_{U(\frak{p}^{l-1}/\frak{p}^r)}
  (\sigma_{\beta,\theta},\chi_r)\neq 0
$$
if and only if
\begin{equation}
 \text{\rm Hom}_{U(\frak{p}^{l-1}/\frak{p}^r)}
  (\text{\rm Ind}_{Z(\frak{p}^{l-1}/\frak{p}^r)}
                 ^{G(\frak{p}^{l-1}/\frak{p}^r)}
                   \psi_{\beta,\rho},\chi_r)\neq 0.
\label{eq:induction-from-z-intertwines-with-chi-f}
\end{equation}
Note that 
$$
 U(\frak{p}^{l-1}/\frak{p}^r)\cap Z(\frak{p}^{l-1}/\frak{p}^r)
 =U(\frak{p}^l/\frak{p}^r)
$$ 
because 
$1_{2n}+\varpi^{l-1}X\npmod{\frak{p}^r}\in 
 U(\frak{p}^{l-1}/\frak{p}^r)\cap Z(\frak{p}^{l-1}/\frak{p}^r)$
 implies that $X\npmod{\frak p}\in\frak{t}(\Bbb F)$ is nilpotent,
 that is $X\npmod{\frak p}=0$. Hence we have
\begin{align*}
 \text{\rm Hom}_{U(\frak{p}^{l-1}/\frak{p}^r)}&
   \left(\text{\rm Ind}_{Z(\frak{p}^{l-1}/\frak{p}^r)}
                  ^{G(\frak{p}^{l-1}/\frak{p}^r)}
                    \psi_{\beta,\rho},\chi_r\right)\\
 =&\bigoplus_g\text{\rm Hom}_{U}
    \left(\text{\rm Ind}_{U \cap gZg^{-1}}^{U}
      \psi_{\text{\rm Ad}(g)\beta,\rho\circ\text{\rm Ad}(g^{-1})},
            \chi_r\right)\\
 =&\bigoplus_g\text{\rm Hom}_{U(\frak{p}^l/\frak{p}^r)}
    \left(\psi_{\text{\rm Ad}(g)\beta,\rho\circ\text{\rm Ad}(g^{-1})},
       \chi_r\right)\\
 =&\bigoplus_g\text{\rm Hom}_{U(\frak{p}^l/\frak{p}^r)}
     \left(\psi_{\text{\rm Ad}(g)\beta},\chi_r\right)
\end{align*}
where $U=U(\frak{p}^{l-1}/\frak{p}^r)$, 
$Z=Z(\frak{p}^{l-1}/\frak{p}^r)$ and 
$\bigoplus_g$ is the direct sum over the representatives $g$ of
the double cosets $U\backslash G(\frak{p}^{l-1}/\frak{p}^r)/Z$. 
So \eqref{eq:induction-from-z-intertwines-with-chi-f} is equivalent to 
$$
 \psi_{\text{\rm Ad}(g)\beta}=\chi_r
 \;\text{\rm on $U(\frak{p}^l/\frak{p}^r)$ for some 
         $g\in G(\frak{p}^{l-1}/\frak{p}^r)$.}
$$

\subsection[]{}
\label{subsec:completion-of-proof-of-genericity}
The combination of the results of subsections 
\ref{subsec:first-step-for-genericity}, 
\ref{subsec:condition-psi-beta-equal-to-chi-f-for-even-r} and
\ref{subsec:condition-psi-beta-equal-to-chi-f-for-odd-r} implies that 
$$
 \text{\rm Hom}_{G(F)}(\pi_{\beta,\theta},
   \text{\rm Ind}_{U(F)}^{G(F)}\chi_u)\neq 0
$$
with $u=(2,2,\cdots,2,\varpi^e)$ ($e\equiv r\npmod{2}$) 
if and only if 
$$
 \psi_{\text{\rm Ad}(g)\beta}
 =\chi_{(2\varpi^{-r},\cdots,2\varpi^{-r},\varpi^{-r})}
 \;\text{\rm on $U(\frak{p}^l/\frak{p}^r)$}
$$
for some $g\in G(O_F)$. We will show that this is the case by proving
the following proposition.

\begin{prop}\label{prop:canonical-form-over-g(o-f)}
There exists a $g\in G(O_F)$ such that 
$\text{\rm Ad}(g)\beta=\begin{bmatrix}
                        A&\ast\\
                        C&-^{\tau}A
                       \end{bmatrix}$ with
$$
 A=\begin{bmatrix}
    \ast&      &      &\bigastu\\
     1  & \ast &      &    \\
        &\ddots&\ddots&    \\
    \bigzerol&      &  1   &\ast
   \end{bmatrix},
 \qquad
 C=\begin{bmatrix}
    0     &\cdots&0&1\\
    0     &\cdots&0&0\\
    \vdots&      &\vdots&\vdots\\
    0     &\cdots&0&0
   \end{bmatrix}.
$$
\end{prop}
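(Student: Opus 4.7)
The claim is equivalent to finding an $O_F$-basis $u_1,\ldots,u_{2n}$ of $O_F^{2n}$ that is symplectic (i.e., the transition matrix lies in $G(O_F)$) and in which $\beta$ acts by the specified matrix: $\beta u_i=A_{ii}u_i+u_{i+1}$ for $1\le i\le n-1$, $\beta u_n=A_{1n}u_1+A_{nn}u_n+u_{n+1}$, with dual relations for the $u_{n+j}$. I would construct such a basis using the cyclic structure coming from irreducibility of $\chi_\beta \npmod{\frak p}$.

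Nakayama's lemma yields a cyclic vector $v_0\in O_F^{2n}$ for $\beta$ (since $\Bbb F^{2n}$ is one-dimensional over $\Bbb F[\beta \npmod{\frak p}]\cong\Bbb K$), giving an $O_F[\beta]$-isomorphism $O_K\,\tilde\to\,O_F^{2n}$, $x\mapsto x\cdot v_0$, under which $\beta$ acts by multiplication. The pulled-back symplectic form is an $O_F$-valued alternating form on $O_K$ satisfying $\langle\beta x,y\rangle=-\langle x,\beta y\rangle$, hence must equal $\langle x,y\rangle=T_{K/F}(c\,x\,\tau(y))$ for a unique $c\in O_K$ with $\tau(c)=-c$; non-degeneracy forces $c=\beta\mu$ with $\mu\in O_{K_+}^\times$. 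Replacing $v_0$ by $\xi v_0$ changes $c$ to $c\cdot N_{K/K_+}(\xi)$, and since $K/K_+$ is unramified quadratic the norm $N_{K/K_+}:O_K^\times\to O_{K_+}^\times$ is surjective, so $\mu$ can be prescribed.

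Choose $\mu$ so that $L:=O_F\langle 1,\beta,\ldots,\beta^{n-1}\rangle$ is Lagrangian: writing $\gamma=\beta^2\in O_{K_+}^\times$ (so $O_{K_+}=O_F[\gamma]$ with minimal polynomial $\phi$ satisfying $\chi_\beta(t)=\phi(t^2)$), the identity $T_{K/K_+}(\mu\beta^m)=\mu\beta^m(1+(-1)^m)$ reduces the Lagrangian condition to the linear system $T_{K_+/F}(\mu\gamma^k)=0$ for $1\le k\le n-1$. A unit $\mu\in O_{K_+}^\times$ solving these exists --- take $\mu=-\chi_\beta(0)/(\gamma\,\phi'(\gamma))$ scaled by $(-1)^{n+1}/2$, using that $\chi_\beta(0)\in O_F^\times$ (from irreducibility mod $\frak p$) and $\phi'(\gamma)\in O_{K_+}^\times$ (from separability). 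Set $u_i=\beta^{i-1}v_0$ for $1\le i\le n$ and $u_{n+1}=\beta^n v_0$, then define $u_{n+j+1}$ inductively by $u_{n+j+1}=-\beta u_{n+j}+\ell_j$, where $\ell_j\in L+O_F u_{n+j}$ is the unique correction preserving isotropy. By induction $u_{n+j}$ has leading term $(-1)^{j-1}\beta^{n+j-1}v_0$, and the cross-pairings $\langle u_i,u_{2n+1-i}\rangle=1$ come out correctly: the leading-term contribution $(-1)^{n-i}\cdot(-1)^{n+i}=1$ matches exactly, and lower-order corrections vanish by the Lagrangian-ness of $L$.

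The main obstacle is verifying the precise zero pattern of the resulting matrix --- in particular that the coefficient of $u_{n+j+1}$ in $\beta u_{n+j}$ is $-1$ (built into the construction), that $\beta u_n$ has no components in $u_2,\ldots,u_{n-1}$, and that $\beta u_{2n}$ has no components in $u_{n+2},\ldots,u_{2n-1}$. These hidden zeros follow from the same trace-annihilation identities $T_{K_+/F}(\mu\gamma^k)=0$ for $1\le k\le n-1$, together with the reduction $\beta^{2n}=-b_{n-1}\beta^{2n-2}-\ldots-b_0$ coming from $\phi(\gamma)=0$, which forces the expansion of the high powers $\beta^{2n-1+k}$ in the $u$-basis to be supported only on the prescribed positions. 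The free entries $A_{ii}$, $A_{1n}$, and $B_{kj}$ are then read off as explicit inner products, yielding the target form.
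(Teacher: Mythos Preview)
Your construction is correct and takes a genuinely different route from the paper. The paper argues indirectly: it writes down an explicit $\beta'\in\frak{g}(O_F)$ already in the required shape with $\chi_{\beta'}=\chi_\beta$, observes that $\beta$ and $\beta'$ are $G(K)$-conjugate (both diagonalize symplectically over the splitting field $K$), descends to $G(F)$-conjugacy using the vanishing of $H^1(\text{\rm Gal}(K/F),T(K))$, and finally forces the conjugating element into $G(O_F)$ via the Iwasawa decomposition $G(F)=G(O_F)T(F)U(F)$ together with irreducibility of $\chi_\beta\pmod{\frak p}$. Your approach instead builds the symplectic $O_F$-basis by hand from the $O_K$-module structure and the twisted trace form $T_{K/F}(\beta\mu\,x\,\tau(y))$.

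Two small remarks on your write-up. First, the correction $\ell_j$ must lie in $L$, not in $L+O_Fu_{n+j}$: a component along $u_{n+j}$ would spoil the required pairing $\langle u_{n+1-j},u_{n+j+1}\rangle=0$. Second, the ``hidden zeros'' you worry about in the lower-right block are automatic --- once $\beta|_L$ determines $A$ and $C$ (with $A$ the pure shift and $C$ having a single $1$, as in your setup), the relation $\beta\in\frak{sp}_{2n}$ forces the lower-right block to equal $-{}^\tau A$ in \emph{any} symplectic completion of $u_1,\ldots,u_{n+1}$ to an $O_F$-basis. So the careful inductive construction of $u_{n+2},\ldots,u_{2n}$ and the extra trace identities you invoke are not actually needed: it suffices to check that $L$ is a Lagrangian direct summand, that $u_{n+1}=\beta^n v_0$ pairs correctly with $L$, and that a Lagrangian $O_F$-complement containing $u_{n+1}$ exists (a standard modification argument, using that $F$ is non-dyadic). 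Your argument is more elementary (no Galois cohomology) and produces an explicit basis; the paper's is shorter and visibly adapts to other quasi-split groups.
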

\begin{proof}
$K$ is the splitting field of the characteristic polynomial 
$\chi_{\beta}(t)$ of
  $\beta\in\frak{g}(O_F)\subset\frak{gl}_{2n}(O_F)$, and we can put
$$
 \chi_{\beta}(t)=\prod_{i=1}^n(t^2-\lambda_i^2)
 \qquad
 (0\neq \lambda_i\in K).
$$
Let $\langle u,v\rangle=uJ_n^tv$ be the symplectic form on $K^{2n}$
and $V_{\lambda}\subset K^{2n}$ the eigen space of $\beta$ with eigen
value $\lambda\in K$. Then 
$\langle V_{\lambda},V_{\mu}\rangle\neq 0$ only if $\lambda+\mu=0$,
and we have
$$
 K^{2n}=V_{\lambda_1}\oplus\cdots\oplus V_{\lambda_n}\oplus
        V_{-\lambda_n}\oplus\cdots\oplus V_{-\lambda_1}.
$$
This means that there exists a symplectic $K$-basis 
$\{u_1,\cdots,u_n,v_n,\cdots,v_1\}$ of $K^{2n}$ such that the
representation matrix of $\beta$ with respect to the basis is 
$$
 \begin{bmatrix}
  \Lambda&0\\
  0&-^{\tau}\Lambda
 \end{bmatrix}
 \;\text{\rm with}\;
 \Lambda=\begin{bmatrix}
          \lambda_1&      &         \\
                   &\ddots&         \\
                   &      &\lambda_n
         \end{bmatrix}.
$$
In other word, there exists a $g\in G(K)$ such that 
$g\beta g^{-1}=\begin{bmatrix}
                \Lambda&0\\
                0&-^{\tau}\Lambda
               \end{bmatrix}$.

We can choose 
$$
 \beta^{\prime}
 =\begin{bmatrix}
   A&0\\
   C&-^{\tau}A
  \end{bmatrix}+\beta_0\in\frak{g}(O_F)
$$
such that $\chi_{\beta^{\prime}}(t)=\chi_{\beta}(t)$ with 
$$
 A=\begin{bmatrix}
        0    &      &      &\bigzerou\\
        1    &   0  &      &         \\
             &\ddots&\ddots&         \\
    \bigzerol&      &   1  &    0
   \end{bmatrix},
 \qquad
 C=\begin{bmatrix}
    0&\cdots&0&1\\
    0&\cdots&0&0\\
    \vdots& &\vdots&\vdots\\
    0&\cdots&0&0
   \end{bmatrix},
$$
and the entries of $\beta_0$ are zero except the elements of the first
row and of the last column. Then there exists a $g\in G(K)$ such that 
$g\beta^{\prime} g^{-1}=\beta$. Because $T=Z_G(\beta)$ splits over
$K$, the first Galois cohomology group $H^1(\text{\rm Gal}(K/F),T(K))$
is trivial. So there exists a $g\in G(F)$ such that 
$g\beta^{\prime} g^{-1}=\beta$. Due to Iwasawa decomposition 
$G(F)=G(O_F)T(F)U(F)$, we can put $g=ktu$ with $k\in G(O_F)$, 
$u\in U(F)$ and 
$$
 t=\begin{bmatrix}
    \varpi^{-m}&0\\
    0&^{\tau}\varpi^{-m}
   \end{bmatrix}\in T(F)
 \;\text{\rm with $m=(m_1.\cdots,m_n)\in\Bbb Z^n$.}
$$
Then $tu\beta^{\prime}u^{-1}t^{-1}=k^{-1}\beta k\in\frak{g}(O_F)$ and
$$
 tu\beta^{\prime}u^{-1}t^{-1}=\begin{bmatrix}
                               X&\ast\\
                               Y&-^{\tau}X
                              \end{bmatrix}
$$
with
$$
 X=\begin{bmatrix}
        \ast        & \ast &\cdots              &\ast\\
    \varpi^{m_2-m_1}& \ast &\cdots              &\ast\\
                    &\ddots&\ddots              &\vdots\\\
                    &      &\varpi^{m_n-m_{n-1}}&\ast
   \end{bmatrix},
 \qquad
 Y=\begin{bmatrix}
    0&\cdots&0&\varpi^{-2m_n}\\
    0&\cdots&0&0\\
    \vdots& &\vdots&\vdots\\
    0&\cdots&0&0
   \end{bmatrix}.
$$
This implies that $m_1\leq m_2\leq\cdots\leq m_n\leq 0$. On the other hand 
$$
 \chi_{\beta^{\prime}}(t)\nnpmod{\frak p}
 =\chi_{\beta}(t)\nnpmod{\frak p}\in\Bbb F[t]
$$
is irreducible, we have $m_1=m_2=\cdots=m_n=0$. 
\end{proof}

\subsection[]{}\label{subsec:final-remark-on-genericity}
We have proved the followings;
\begin{enumerate}
\item the irreducible supercuspidal representations 
      $\pi_{\beta,\theta}
       =\text{\rm ind}_{G(O_F)}^{G(F)}\delta_{\beta,\theta}$ are
      generic,
\item $\text{\rm Hom}_{G(F)}\left(\pi_{\beta,\theta},
           \text{\rm Ind}_{U(F)}^{G(F)}\chi_u\right)\neq 0$ with 
      $u=(2,2,\cdots,2,\varpi^e)$ if and only if 
      $e\equiv r\npmod{2}$.
\end{enumerate}


Sendai 980-0845, Japan\\
Miyagi University of Education\\
Department of Mathematics
\end{document}